\documentclass[12pt,reqno]{article}
\usepackage{amssymb,amscd,amsmath,amsthm,color}
\newtheorem{theorem}{Theorem}[section]
\newtheorem{lemma}[theorem]{Lemma}
\newtheorem{cor}[theorem]{Corollary}
\newtheorem{prop}[theorem]{Proposition}
\newtheorem{conjecture}[theorem]{Conjecture}
\usepackage{enumerate,amssymb}
\theoremstyle{definition}

\newtheorem{example}[theorem]{Example}
\newtheorem{question}[theorem]{Question}
\theoremstyle{remark}
\newtheorem{remark}[theorem]{Remark}

\numberwithin{equation}{section}

\def\ind{\mathbf{1}}

\def\bM{\mathbb{M}}
\def\bN{\mathbb{N}}

\newcommand{\sym}{\mathrm{sym}}
\newcommand{\qsym}{\mathrm{qsym}}

\begin{document}
\baselineskip=15pt

\title{Averages over matrix unitary orbits \\ and  spectral order}

\author{ Jean-Christophe Bourin and Eun-Young Lee}

\date{ }

\maketitle

\vskip 10pt\noindent
{\small
{\bf Abstract.} We establish  matrix versions of the comparisons between the $\ell^p$-norms or quasi-norms for sequences of complex numbers.
For instance, given $1\ge q>0$, and a family of $m$ normal $d\times d$ matrices $A_1,\ldots, A_m$, we show that
$$
\left|\sum_{k=1}^m A_k\right| \le \frac{1}{d}\sum_{i=1}^d V_i\left\{\sum_{k=1}^m  |A_k|^{q}\right\}^{1/q}\!\!\!\!V_i^*
$$
for some unitary $d\times d$ matrices $V_1,\ldots, V_d$.  We also give  applications  to Olson's spectral order  and to the comparison between  the symmetric modulus and the  quadratic symmetric modulus. In particular we show that the sum $A+B$ of two positive matrices submajorizes their Kato supremum $A\vee B$, thereby completing majorization results due to Ando.

\vskip 5pt\noindent
{\it Keywords.} Matrix inequalities,    unitary orbits,  symmetric modulus, Olson's spectral order.
\vskip 5pt\noindent
{\it 2020 mathematics subject classification.}  15A42, 15A60,  47A30, 47A60.
}

\section{Introduction}

A basic inequality for $m$ complex numbers $a_1,\ldots,  a_m$ is the comparison, for $0<q\le 1\le  p$, between the $\ell^p$-norm and the $\ell^q$-quasi-norm,
\begin{equation}\label{elem}
 \left\{\sum_{k=1}^m |a_k|^p\right\}^{1/p} \le  \sum_{k=1}^m |a_k| \le \left\{\sum_{k=1}^m |a_k|^q\right\}^{1/q} .
\end{equation}

What can be said in the matrix setting? Investigating this question has led us to several elegant matrix inequalities involving unitary orbits and convex or concave functions. These results are established in the next section, where we also discuss an application to the quadratic symmetric modulus arising in Zhang’s triangle inequality. We further explain the significance of Zhang’s 2026 result.

The third section is devoted to applications to the spectral order, including comparisons between sums and parallel sums  of positive matrices and their suprema and infima. This completes a line of majorisation results initiated by Ando in the late 1980s. We also include a concise treatment of the spectral order relation $\preceq$, together  with a new proof of the fact that, for positive operators,
$$
A\preceq B \iff  A^n\le B^n \ \ \! {\mathrm{ for\ all}}\ n\in\bN.
$$
This  order, introduced by Olson in 1971, endows the set of Hermitian operators with a lattice structure,
extending the lattice of projections. It allows us  to introduce the maximal and minimal symmetric moduli. Adapting Zhang’s inequality to these moduli
 appears to be a challenging problem.

Our main tools are powerful matrix analogues of scalar inequalities for convex and concave functions. One of the earliest examples along these lines is von Neumann's trace inequality (the special case $g(t)=t\log t$ is  the concavity of von Neumann's entropy), which states that
\begin{equation}\label{vN}
{\mathrm{Tr\,}} g\left(\frac{A+B}{2}\right) \le {\mathrm{Tr\,}}\frac{ g(A)+g(B)}{2} 
\end{equation}
for every convex function $g(t)$ defined on an interval containing the spectra of the Hermitian matrices $A$ and $B$. If furthermore $g(0)$ =0 and $A$ and $B$ are positive semidefinite, then
\begin{equation}\label{Rot}
{\mathrm{Tr\,}} g(A+B) \ge {\mathrm{Tr\,}}\! \left(g(A)+ g(B) \right)
\end{equation}
as shown by Rotfel'd in 1969 \cite{Rot}. We shall use   refinements of \eqref{vN}-\eqref{Rot} involving averages over the unitary orbit of a Hermitian matrix (or over the orthogonal orbit in the real symmetric case).  In a short final section, we establish a relation between a real symmetric matrix $S$ and its diagonal part $\Delta(S)$. In the $3\times3$ case,
$$
\Delta(S)=\frac{1}{4}\sum_{i=1}^4 U_iSU_i^*
$$
for some orthogonal matrices $U_1,\ldots, U_4$. We shall employ this  observation   to derive real analogues of results such as the inequality stated in the Abstract.

Thus, this  note lies within the longstanding tradition in matrix analysis of extending scalar  inequalities and identities to the noncommutative setting of matrices. Before proceeding, 
we  fix some notation.  We denote by $\bM_d$  the space of complex $d\times d$ matrices and $\bM_d^+$  its cone of positive semidefinite matrices. For $A\in\bM_d$, we write $|A|=(A^*A)^{1/2}$  for its absolute value (or right modulus).  If $A\in\bM_d$ is  Hermitian, we denote by 
$$\lambda_1^{\downarrow}(A)\ge \cdots\ge  \lambda_d^{\downarrow}(A)$$  its eigenvalues arranged  in   nonincreasing order, and by
$
A^{\downarrow}$ the diagonal matrix in $\bM_d$ having these eigenvalues on the diagonal. For a pair of Hermitian matrices $A,B\in\bM_d$, the weak majorization, or submajorization, relation $A\prec_w B$ means that
 \begin{equation}\label{submajdef}
\sum_{j=1}^k\lambda_j^{\downarrow}(A) \le  \sum_{j=1}^k\lambda_j^{\downarrow}(B)
\end{equation}
for all $k=1,\ldots, d$. If,
 furthermore,   equality holds for $k=d$, that  is, if $A$ and $B$ have  the same trace, then we say that $A$  is majorized by $B$, and we write $A\prec B$. For each $k$, the  sum of the $k$ largest eigenvalues occuring in \eqref{submajdef} is a subbaditive functional on the Hermitian part of $\bM_d$.

\section{Unitary orbits }

A stronger form of  \eqref{vN} states that, for Hermitian matrices $A,B\in\bM_d$ and a convex function $g(t)$ defined on an interval containing the spectra of $A$ and $B$, 
\begin{equation}\label{vNB}
 g\left(\frac{A+B}{2}\right) \le \frac{1}{2}\left\{U \frac{ g(A)+g(B)}{2} U^* + V \frac{ g(A)+g(B)}{2} V^*\right\}
\end{equation}
for some unitary matrices $U,V\in\bM_d$, \cite[Corollary 2.2]{BL}. For a concave function, the inequality reverses. 
We shall use a multivariable version of this result, together with related inequalities, to prove our main theorems.

\subsection{Convex and concave functions}

\vskip 5pt
\begin{theorem}\label{th1} Let $g(t)$ and $h(t)$ be two nondecreasing convex functions on $[0,\infty)$ such that $h(0)=0$. If $\{A_k\}_{k=1}^m$ are  matrices in $\bM_d^+$, then there exist unitary matrices $\{V_i\}_{i=1}^d$  in $\bM_d$ such that 
$$ 
g\left(\sum_{k=1}^m h(A_k)\right) \le \frac{1}{d} \sum_{i=1}^d V_i\left\{ g\circ h\left(\sum_{k=1}^m A_k\right)\right\}V_i^*.
$$
\end{theorem}

\vskip 10pt
As a special case, taking $g(t)=t^r$ and $h(t)=t^{p}$
 with $p,r\ge 1$ we obtain a matrix version of the left-hand inequality in \eqref{elem}.

\vskip 5pt
\begin{cor} Let $\{A_k\}_{k=1}^m$ be matrices in $\bM_d^+$ and let $p,r\ge 1$. Then there exist unitary matrices $\{V_i\}_{i=1}^d$  in $\bM_d$ such that 
$$
\left(\sum_{k=1}^m A_k^p\right)^{r}  \le \frac{1}{d} \sum_{i=1}^d V_i\left\{\sum_{k=1}^m A_k\right\}^{pr}\!V_i^*.
$$
\end{cor}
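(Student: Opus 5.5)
The corollary is the special case of Theorem~\ref{th1} with $h(t)=t^p$ and $g(t)=t^r$. We only have to check the hypotheses. Both functions are defined on $[0,\infty)$ and nondecreasing there. They are convex because $p\ge 1$ and $r\ge 1$. We also have $h(0)=0^p=0$.

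Theorem~\ref{th1} therefore provides unitaries $V_1,\ldots,V_d\in\bM_d$ with
$$
\left(\sum_{k=1}^m A_k^p\right)^{r}= g\left(\sum_{k=1}^m h(A_k)\right)\le \frac{1}{d}\sum_{i=1}^d V_i\left\{ g\circ h\left(\sum_{k=1}^m A_k\right)\right\}V_i^*.
$$

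It remains to identify the right-hand side. Since $g\circ h(t)=(t^p)^r=t^{pr}$ on $[0,\infty)$, and $\sum_k A_k\in\bM_d^+$ has spectrum in $[0,\infty)$, the functional calculus gives $g\circ h\left(\sum_k A_k\right)=\left(\sum_k A_k\right)^{pr}$. Substituting this yields the displayed inequality of the corollary.

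There is no real obstacle: all the analytic content lies in Theorem~\ref{th1}, which we are allowed to assume. The one point needing care is the identity $g\circ h(X)=X^{pr}$ for positive $X$. It follows from the spectral theorem, because $(\lambda^p)^r=\lambda^{pr}$ for every eigenvalue $\lambda\ge0$.
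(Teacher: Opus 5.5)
Your proposal is correct and is exactly the paper's argument: the corollary is Theorem~\ref{th1} specialized to $g(t)=t^r$ and $h(t)=t^p$, and you verify its hypotheses (nondecreasing, convex on $[0,\infty)$, $h(0)=0$). Your identification $g\circ h\bigl(\sum_k A_k\bigr)=\bigl(\sum_k A_k\bigr)^{pr}$ via the functional calculus is also correct.
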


\vskip 5pt\noindent
\begin{proof} (Theorem \ref{th1}) From \cite[Corollary 3.2]{BLjot} we have
\begin{equation}\label{jot}
\left\|  \sum_{k=1}^m h(A_k) \right\|\le \left\|  h\left(\sum_{k=1}^m A_k\right) \right\| 
\end{equation}
for all symmetric (i.e., unitarily invariant) norms $\|\cdot\|$ on $\bM_d$. Let 
\begin{equation*}
A:=\sum_{k=1}^m h(A_k), \quad B:= h\left(\sum_{k=1}^m A_k\right).
\end{equation*} 
By the Fan dominance principle, \eqref{jot} is equivalent to   $$A\prec_{w}B.$$
Consider the spectral decomposition
$$
B=\sum_{k=1}^d\lambda_k^{\downarrow}(B)E_k,
$$
and define
$$
B':= \left(\sum_{k=1}^{d-1}\lambda_k^{\downarrow}(B)E_k\right) + \mu E_d
$$
where $\mu:= \lambda_d^{\downarrow}(B)-{\mathrm{Tr\,}} B +{\mathrm{Tr\,}} A $  is chosen so that
$$
A\prec B'.
$$
By the Schur-Horn theorem
(see \cite{M} or \cite{Chan} for  elegant proofs), $A$ is unitarily equivalent to the diagonal part 
 of $WB'W^*$ for some unitary $W$; that is,
\begin{equation}\label{SH}
A=U\Delta(WB'W^*)U^*
\end{equation}
for some unitary $U\in\bM_d$.
By an identity due to Bhatia \cite[Eq.\ (2)]{Bh-monthly}, there exist diagonal
unitary matrices $R_1,\ldots,R_d\in\bM_d$ such that
\[
\Delta(X)=\frac{1}{d}\sum_{i=1}^d R_iXR_i^*
\]
for every matrix $X\in\bM_d$. Applying this identity to
$X=WB'W^*$ yields
\[
\Delta(WB'W^*)
=
\frac{1}{d}\sum_{i=1}^d
R_iWB'W^*R_i^*.
\]
Hence,
\[
A
=
U\Delta(WB'W^*)U^*
=
\frac{1}{d}\sum_{i=1}^d
(UR_iW)\,B'\,(UR_iW)^*.
\]
Setting $U_i:=UR_iW$, we obtain
\[
A=\frac{1}{d}\sum_{i=1}^d U_iB'U_i^*.
\]
Since $g(t)$ is nondecreasing, we have a unitary $U_0$ such that
$$
g(A)\le U_0g\left(\frac{1}{d}\sum_{i=1}^d U_i BU_i^*\right)U_0^*.
$$
Now, we can take $U=V$ in \eqref{vNB} when the convex function is  monotone, and  a related inequality for $m$ matrices \cite[Corollary 2.4]{BL} ensures that
$$
g\left(\frac{1}{d}\sum_{i=1}^d U_i BU_i^*\right) \le V_0 \left(\frac{1}{d}\sum_{i=1}^d U_i g(B)U_i^*\right)V_0^*.
$$
This combined with the previous inequality yields
$$
g(A) \le \frac{1}{d}\sum_{i=1}^d V_i g(B)V_i^*
$$
for some unitaries $V_1,\ldots, V_d$. Returning to  the definitions of $A$ and $B$ completes the proof. 
\end{proof}

\vskip 5pt
We record a key point of this proof in the following lemma. The notation $\bM_d(\Omega)$
stands for the Hermitian part of $\bM_d$ with spectra in an interval $\Omega$ of the real line.

\vskip 5pt
\begin{lemma}\label{L1} Let $A,B\in\bM_d(\Omega)$ and let $g(t)$ be a nondecreasing convex function on $\Omega$. If $A\prec_w B$, then 
there exist unitary matrices $\{V_i\}_{i=1}^d$  in $\bM_d$ such that 
$$
g(A)\le \frac{1}{d}\sum_{i=1}^d V_i g(B)V_i^*.
$$
\end{lemma}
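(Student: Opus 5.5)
The plan is to extract the argument already used in the proof of Theorem \ref{th1}, adapting it to a general interval $\Omega$. First I reduce weak majorization to majorization. Write the spectral decomposition $B=\sum_{k=1}^d\lambda_k^{\downarrow}(B)E_k$ and set
$$
B':=\sum_{k=1}^{d-1}\lambda_k^{\downarrow}(B)E_k+\mu E_d,\qquad \mu:=\lambda_d^{\downarrow}(B)-{\mathrm{Tr\,}}B+{\mathrm{Tr\,}}A .
$$
Since $A\prec_w B$ gives ${\mathrm{Tr\,}}A\le{\mathrm{Tr\,}}B$, we have $\mu\le\lambda_d^{\downarrow}(B)$. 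Hence $B'\le B$, the eigenvalues of $B'$ stay in nonincreasing order, and $A\prec B'$: the first $d-1$ partial sums are unchanged and the traces agree. A point specific to general $\Omega$ is that $\mu$ may leave $\Omega$. However, $A\prec B'$ forces $\mu=\lambda_d^{\downarrow}(B')\le\lambda_d^{\downarrow}(A)\in\Omega$, and $\lambda_1^{\downarrow}(B')=\lambda_1^{\downarrow}(B)\in\Omega$ when $d\ge 2$. So the spectrum of $B'$ lies in $[\lambda_d^{\downarrow}(B'),\lambda_1^{\downarrow}(B')]\subset[\min(\mu,\cdot),\ldots]$. More precisely, I will simply avoid evaluating $g$ at $B'$, as explained next.

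Second, by Schur--Horn and Bhatia's identity $\Delta(X)=\frac1d\sum_i R_iXR_i^*$, exactly as in the proof of Theorem \ref{th1}, we get unitaries $U_i$ with
$$
A=\frac1d\sum_{i=1}^d U_iB'U_i^*\le \frac1d\sum_{i=1}^d U_iBU_i^*=:C .
$$
Here $C$ has spectrum in $\Omega$ by convexity of $\Omega$, so $g$ is never applied to $B'$. Since $A\le C$, Weyl monotonicity gives $\lambda_j^{\downarrow}(A)\le\lambda_j^{\downarrow}(C)$ for all $j$. Because $g$ is nondecreasing, $\lambda_j^{\downarrow}(g(A))\le\lambda_j^{\downarrow}(g(C))$. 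It follows that $g(A)\le U_0g(C)U_0^*$ for the unitary $U_0$ that aligns the eigenbases of $A$ and $C$.

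Third, I apply the $m$-matrix Jensen-type inequality \cite[Corollary 2.4]{BL}, in its monotone form where a single unitary suffices. This gives
$$
g\Big(\frac1d\sum_{i=1}^d U_iBU_i^*\Big)\le V_0\Big(\frac1d\sum_{i=1}^d U_ig(B)U_i^*\Big)V_0^*
$$
for some unitary $V_0$. Combining the two steps and setting $V_i:=U_0V_0U_i$ yields the statement.

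The main obstacle is the careful invocation of \cite[Corollary 2.4]{BL}. I need a version for $d$ equally weighted terms, with $g$ convex and monotone, in which a single unitary conjugation suffices. The paper asserts this by analogy with taking $U=V$ in \eqref{vNB}, so I rely on that. If only the two-unitary form were available, the averaging would become a sum over $2d$ unitaries, and I would need to check that it can be collapsed back to $d$ of them. A secondary subtlety is the case $d=1$, where $B'=A$ and the statement is trivial from monotonicity of $g$.
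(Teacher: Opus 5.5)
Your proposal is correct and follows the paper's own argument, which is the core of the proof of Theorem \ref{th1}: trim $B$ to $B'\le B$ with $A\prec B'$, write $A=\frac1d\sum_iU_iB'U_i^*$ via Schur--Horn and Bhatia's identity, pass to $C=\frac1d\sum_iU_iBU_i^*$ using monotonicity of $g$, then apply the single-unitary monotone Jensen inequality \cite[Corollary 2.4]{BL}. Your explicit remark that $g$ is only evaluated at $C$, whose spectrum lies in $\Omega$, is a useful clarification, and the half-finished sentence about where $\mu$ lies can simply be dropped. The single-unitary form you worry about is available: testing $\frac1d\sum_iU_ig(B)U_i^*$ on the span of the top $j$ eigenvectors of $C$, and using $\langle g(X)x,x\rangle\ge g(\langle Xx,x\rangle)$ for unit vectors together with convexity and monotonicity of $g$, gives $\lambda_j^{\downarrow}(g(C))\le\lambda_j^{\downarrow}\bigl(\frac1d\sum_iU_ig(B)U_i^*\bigr)$ for all $j$, which yields $V_0$.
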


\vskip 5pt
Beyond its role in the proof of Theorem \ref{th1}, Lemma \ref{L1} entails a fundamental property of submajorization: convex nondecreasing functions $g(t)$ on $\Omega$ preserve submajorization $\prec_w$ on $\bM_d(\Omega)$:
\begin{equation}\label{fund}
A\prec_w  B \Rightarrow g(A) \prec_w g(B).
\end{equation}
Our proof of Lemma \ref{L1} (and hence of Theorem \ref{th1}) does not depend on \eqref{fund}  and provides an alternative derivation of \eqref{fund} from a unitary Jensen inequality combined with the Schur-Horn theorem.

\vskip 5pt
\begin{theorem}\label{th2} Let $f,g:[0,\infty)\to[0,\infty)$ be nondecreasing. Assume that $f(t)$ is concave and  $g(t)$ is convex. If $\{A_k\}_{k=1}^m$ are  normal matrices in $\bM_d$, then there exist unitary matrices $\{V_i\}_{i=1}^d$  in $\bM_d$ such that 
$$ 
g\circ f\left(\left|\sum_{k=1}^m A_k\right|\right) \le \frac{1}{d} \sum_{i=1}^d V_i g\left(\sum_{k=1}^m f(|A_k|)\right)V_i^*.
$$
\end{theorem}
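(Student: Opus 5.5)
Following the pattern of Theorem \ref{th1}, I reduce everything to Lemma \ref{L1} applied with $\Omega=[0,\infty)$ and the nondecreasing convex function $g$. Set
$$
A:=f\left(\left|\sum_{k=1}^m A_k\right|\right),\qquad B:=\sum_{k=1}^m f(|A_k|).
$$
Both are positive semidefinite, since $f\ge 0$. If I can show $A\prec_w B$, then Lemma \ref{L1} gives unitaries $V_1,\ldots,V_d$ with $g(A)\le \frac1d\sum_i V_i g(B)V_i^*$. Since $g(A)=g\circ f\left(\left|\sum_k A_k\right|\right)$ by the functional calculus, this is exactly the statement.

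The whole proof therefore rests on the submajorization
$$
f\left(\left|\sum_{k=1}^m A_k\right|\right)\prec_w \sum_{k=1}^m f(|A_k|)
$$
for normal $A_k$ and nonnegative nondecreasing concave $f$. By the Fan dominance principle, this is equivalent to
$$
\left\| f\left(\left|\sum_k A_k\right|\right)\right\|\le \left\|\sum_k f(|A_k|)\right\|
$$
for all symmetric norms. I plan to derive it in two steps.

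\emph{Step 1 (triangle inequality for normal matrices).} I first use a unitary-orbit triangle inequality: for normal $A_k$,
$$
\left|\sum_k A_k\right|\le \frac12\sum_k\left(U|A_k|U^*+V|A_k|V^*\right)
$$
for some unitaries $U,V$. This is the Thompson/Bourin–Lee type inequality. For normal $A_k$ one has $|A_k|=|A_k^*|$, so the usual left and right moduli coincide, which is where normality enters. Setting $X:=\sum_k U|A_k|U^*$ and $Y:=\sum_k V|A_k|V^*$, monotonicity of $f$ on eigenvalues yields
$$
f\left(\left|\sum_k A_k\right|\right)\le W f\left(\frac{X+Y}2\right)W^*
$$
for some unitary $W$.

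\emph{Step 2 (concavity and subadditivity).} Concavity of $f$ combined with \eqref{vNB} reversed gives
$$
f\left(\frac{X+Y}{2}\right)\le \frac12\left(U'f(X)U'^*+V'f(Y)V'^*\right)
$$
for some unitaries $U',V'$. Here $f(X)=U\,f\left(\sum_k|A_k|\right)U^*$, and similarly for $Y$. Applying a symmetric norm, each term has norm $\left\|f\left(\sum_k|A_k|\right)\right\|$. Finally, the concave counterpart of \eqref{jot} (the Bourin–Uchiyama subadditivity) gives
$$
\left\|f\left(\sum_k |A_k|\right)\right\|\le \left\|\sum_k f(|A_k|)\right\|
$$
for nonnegative concave $f$. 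Chaining these norm inequalities yields $A\prec_w B$.

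\emph{Main obstacle.} The hard part is Step 1, the normal triangle inequality with unitary orbits and its $m$-term form. Normality is essential here, since for general matrices one only gets $|\sum A_k|$ bounded by a mixture of left and right moduli. I would first try to obtain the $m$-term version by induction from the two-term case, absorbing unitaries at each stage. If that fails, I would instead cite the direct multivariable statement (Bourin–Lee, "Unitary orbits of Hermitian operators with convex or concave functions"). Steps 2 and the final appeal to Lemma \ref{L1} are routine given the earlier results.
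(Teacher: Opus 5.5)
\paragraph{Where the proposal breaks.} Your reduction matches the paper's: with $A=f\left(\left|\sum_kA_k\right|\right)$ and $B=\sum_kf(|A_k|)$, prove $A\prec_w B$ and apply Lemma~\ref{L1} with $g$. The paper obtains $A\prec_w B$ by citing Bourin's subadditivity theorem for normal matrices \cite[Theorem 3.3]{Bpams}. The gap is in your own derivation of this submajorization, specifically Step 2.

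For a concave $f$, the reversed form of \eqref{vNB} is a \emph{lower} bound:
\[
f\left(\tfrac{X+Y}{2}\right)\ge\tfrac12\left\{U'\tfrac{f(X)+f(Y)}{2}U'^*+V'\tfrac{f(X)+f(Y)}{2}V'^*\right\}.
\]
The upper bound $f\left(\frac{X+Y}{2}\right)\le\frac12\left(U'f(X)U'^*+V'f(Y)V'^*\right)$ that you write is false. Take $X=\mathrm{diag}(1,0)$, $Y=\mathrm{diag}(0,1)$ and $f(t)=\sqrt t$. The left side is $2^{-1/2}I$, with trace $\sqrt2$, while the right side has trace $1$.

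The same example, with $S=\mathrm{diag}(1,0)$, $X=S$ and $Y$ a unitary conjugate of $S$, kills the norm estimate you actually need at this point: $\|f(\frac{X+Y}{2})\|\le\|f(S)\|$ for $X,Y$ in the unitary orbit of $S=\sum_k|A_k|$. For the trace norm it reads $\sqrt2\le 1$.

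This failure is structural. We have $\frac{X+Y}{2}\prec S$, and a concave $f$ raises $\mathrm{Tr}\,f$ under such averaging. So a bound of $\left|\sum_kA_k\right|$ by the average of two unitary conjugates of $S$ cannot be pushed through $f$ when $U,V$ are arbitrary, as they are in your Step 2. The obvious repair does not work either. The correct Thompson-type bound $f(P+Q)\le Uf(P)U^*+Vf(Q)V^*$, with $P=X/2$ and $Q=Y/2$, only yields $2\|f(S/2)\|$. Since $2f(S/2)\ge f(S)$, this is too weak already for $m=1$ and $A_1\ge0$.

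\paragraph{What survives and how to close the gap.} Step 1 is correct. It follows from positivity of $\begin{bmatrix}S&T\\T^*&S\end{bmatrix}$, where $T=\sum_kA_k$, by compressing $\begin{bmatrix}S&-T\\-T^*&S\end{bmatrix}$ with $\begin{bmatrix}W\\ I\end{bmatrix}$, $W$ the unitary polar factor of $T$. This gives $U=I$ and $V=W^*$. The final appeal to Bourin--Uchiyama is also fine.

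What is missing is a valid bridge between the two. The submajorization $f\left(\left|\sum_kA_k\right|\right)\prec_w\sum_kf(|A_k|)$ is the genuinely nontrivial ingredient, and you have two options. You can cite \cite{Bpams} as the paper does. Or you can prove it along a route that keeps positive and negative parts separate:
\begin{enumerate}
\item Pass to the Hermitian dilations $\begin{bmatrix}0&A_k\\A_k^*&0\end{bmatrix}$, whose moduli are $|A_k|\oplus|A_k|$ by normality.
\item Bound the modulus of a Hermitian $P-Q$, with $P,Q\ge0$, by the pinching $EPE+(I-E)Q(I-E)$, where $E$ is the positive spectral projection of $P-Q$.
\item Finish with interlacing, concave subadditivity, and $\|X\oplus Y\|\le\|X+Y\|$ for $X,Y\ge0$.
\end{enumerate}
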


\vskip 5pt
\begin{proof} It was shown in \cite{Bpams} that, for every pair of normal matrices $N,M\in\bM_d$, we have 
$$
\| f(|N+M|)\| \le \| f(|N|) + f(|M|) \|
$$
for all symmetric norms.  The corresponding $m$-variables version  holds,
  as stated in \cite[Theorem 3.3]{Bpams}, or just before \cite[Corollary 2.4]{Bpams}. Thus
$$
\left\|  f\left(\left|\sum_{k=1}^m A_k\right|\right)  \right\| \le \left\| \sum_{k=1}^m f(|A_k|)\right\|.
$$
This is a submajorization relation $\prec_w$ between the matrix in the left-hand and that one in the right-hand. So we may apply Lemma \ref{L1}
to conclude
\end{proof}

\vskip 5pt
For $1\ge q>0$ and  the functions $g(t)=t^{1/q}$ and $f(t)=t^{q}$,  we  obtain a matrix version of the right-hand inequality in \eqref{elem}:

\vskip 5pt
\begin{cor}\label{corq} Let $\{A_k\}_{k=1}^m$ be normal matrices in $\bM_d$ and let $1\ge q>0$. Then there exist unitary matrices $\{V_i\}_{i=1}^d$  in $\bM_d$ such that 
$$
\left|\sum_{k=1}^m A_k \right| \le \frac{1}{d} \sum_{i=1}^d V_i\left\{\sum_{k=1}^m |A_k|^q\right\}^{1/q}\!\!\!\!V_i^*.
$$
\end{cor}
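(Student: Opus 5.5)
This is a direct specialization of Theorem \ref{th2}, so the plan is simply to choose the two functions and check the hypotheses. Take
$$
f(t)=t^q, \qquad g(t)=t^{1/q}, \qquad t\ge 0 .
$$
We need the following facts.
\begin{enumerate}[(i)]
\item Both functions map $[0,\infty)$ into $[0,\infty)$ and are nondecreasing.
\item Since $0<q\le 1$, the function $f(t)=t^q$ is concave on $[0,\infty)$.
\item Since $1/q\ge 1$, the function $g(t)=t^{1/q}$ is convex on $[0,\infty)$.
\end{enumerate}
Therefore Theorem \ref{th2} applies to any family of normal matrices $A_1,\ldots,A_m\in\bM_d$. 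It gives unitaries $V_1,\ldots,V_d$ with
$$
g\circ f\left(\left|\sum_{k=1}^m A_k\right|\right)\le \frac1d\sum_{i=1}^d V_i\, g\left(\sum_{k=1}^m |A_k|^q\right)V_i^*.
$$

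Next we identify both sides through the functional calculus. Because $g\circ f(t)=(t^q)^{1/q}=t$ on $[0,\infty)$, the left-hand side is just $\left|\sum_k A_k\right|$. The matrix $\sum_k |A_k|^q$ is positive semidefinite, so $g$ of it is its $1/q$ power, $\left\{\sum_k |A_k|^q\right\}^{1/q}$. Substituting these two identifications gives exactly the inequality of Corollary \ref{corq}.

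There is no genuine obstacle, since all the real work sits in Theorem \ref{th2}. The only point needing care is that $g\circ f$ is the identity when the functions are applied to positive matrices. This holds because the spectrum of $\left|\sum_k A_k\right|$ lies in $[0,\infty)$, where the scalar identity $(t^q)^{1/q}=t$ holds pointwise. The case $q=1$ is the trivial instance of the same argument.
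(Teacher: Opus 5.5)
Your proposal is correct and follows the paper's own route: the paper obtains this corollary by specializing Theorem \ref{th2} to $f(t)=t^{q}$ and $g(t)=t^{1/q}$, so that $g\circ f$ is the identity on $[0,\infty)$. Your check of the hypotheses (nonnegativity, monotonicity, concavity of $f$, convexity of $g$) and your identification of both sides via the functional calculus are exactly what is needed.
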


\vskip 5pt
Let ${\mathrm{Re\,}}Z=(Z+Z^*)/2$ and ${\mathrm{Im\,}}Z=(Z-Z^*)/2i$ denote the real and imaginary parts of $Z\in\bM_d$. Two
further special cases of Theorem \ref{th2} are the following.

\vskip 5pt
\begin{cor}\label{cartpq} Let $Z\in\bM_d$  and let $p\ge 1\ge q>0$. Then there exist unitary matrices $\{V_i\}_{i=1}^d$  in $\bM_d$ such that 
$$
\left| Z\right|^{pq} \le \frac{1}{d} \sum_{i=1}^d V_i\left(|{\mathrm{Re\,}}Z|^q+ |{\mathrm{Im\,}}Z|^q\right)^{p}V_i^*.
$$
\end{cor}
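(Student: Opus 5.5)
Corollary \ref{cartpq} follows from Theorem \ref{th2} with $m=2$, taking the normal matrices $A_1={\mathrm{Re\,}}Z$ and $A_2=i\,{\mathrm{Im\,}}Z$. Both are normal: the first is Hermitian and the second is skew-Hermitian. Their sum is $Z$, and their moduli are $|A_1|=|{\mathrm{Re\,}}Z|$ and $|A_2|=|i\,{\mathrm{Im\,}}Z|=|{\mathrm{Im\,}}Z|$. Note that $Z$ itself need not be normal; Theorem \ref{th2} only requires the summands $A_k$ to be normal.

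I will choose the functions $f(t)=t^q$ and $g(t)=t^p$ on $[0,\infty)$. Since $0<q\le 1$, $f$ is nondecreasing, concave and nonnegative. Since $p\ge 1$, $g$ is nondecreasing, convex and nonnegative. Thus the hypotheses of Theorem \ref{th2} hold. Its conclusion gives unitaries $V_1,\ldots,V_d$ with
$$
g\circ f(|Z|)=|Z|^{pq}\le \frac{1}{d}\sum_{i=1}^d V_i\left(|{\mathrm{Re\,}}Z|^q+|{\mathrm{Im\,}}Z|^q\right)^{p}V_i^*,
$$
which is exactly the claim. The identity $(|Z|^q)^p=|Z|^{pq}$ holds by the functional calculus.

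The only point needing care is that $\left|\sum_k A_k\right|$ in Theorem \ref{th2} means the right modulus $(Z^*Z)^{1/2}$. This matches the paper's convention for $|Z|$, so no adjustment is needed. There is no real obstacle: all the content sits in Theorem \ref{th2}, that is, in the normal-matrix subadditivity result of \cite{Bpams} combined with Lemma \ref{L1}.
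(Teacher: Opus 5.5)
Your proposal is correct and is exactly the paper's approach: the corollary is presented as a direct special case of Theorem \ref{th2} with the normal summands $A_1={\mathrm{Re\,}}Z$ and $A_2=i\,{\mathrm{Im\,}}Z$, and with $f(t)=t^q$, $g(t)=t^p$. Your checks on $Z=A_1+A_2$, on $|A_2|=|{\mathrm{Im\,}}Z|$, and on the hypotheses for $f$ and $g$ are all accurate.
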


\vskip 5pt
\begin{cor} Let $Z\in\bM_d$  and let $c>0$. Then there exist unitary matrices $\{V_i\}_{i=1}^d$  in $\bM_d$ such that 
$$
\left| Z\right|\wedge cI \le \frac{1}{d} \sum_{i=1}^d V_i\left(|{\mathrm{Re\,}}Z|\wedge cI +  |{\mathrm{Im\,}}Z|\wedge cI\right)V_i^*.
$$
\end{cor}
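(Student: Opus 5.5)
The plan is to obtain this as a special case of Theorem \ref{th2}, in the same way as Corollary \ref{cartpq}. Write $Z=X+iY$ with $X={\mathrm{Re\,}}Z$ and $Y={\mathrm{Im\,}}Z$ Hermitian. Take $m=2$, $A_1=X$ and $A_2=iY$. Both are normal: $X$ is Hermitian and $iY$ is skew-Hermitian. Their sum is $A_1+A_2=Z$, and $|A_1|=|X|$, $|A_2|=|iY|=|Y|$.

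Now choose the functions. Let $f(t)=\min(t,c)$, which is nondecreasing, concave and nonnegative on $[0,\infty)$. Let $g(t)=t$, which is nondecreasing, convex and nonnegative. By functional calculus, $f(|X|)=|X|\wedge cI$. Here $\wedge$ is read as the functional calculus $\min(\cdot,c)$; for a positive matrix $T$ and a scalar multiple of $I$ this coincides with $T\wedge cI$, since $T$ and $cI$ commute.

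With these choices Theorem \ref{th2} gives, for some unitaries $V_1,\ldots,V_d$,
$$
f(|Z|)\le \frac1d\sum_{i=1}^d V_i\bigl(f(|X|)+f(|Y|)\bigr)V_i^*.
$$
This is exactly the stated inequality.

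The only point needing any care is that $|Z|$ in Theorem \ref{th2} means $|A_1+A_2|=(Z^*Z)^{1/2}$, which matches the right modulus used in the statement. No further obstacle is expected, since all the work is done by Theorem \ref{th2}, and through it by Lemma \ref{L1} and the norm inequality from \cite{Bpams}.
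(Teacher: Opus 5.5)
Your proposal is correct and follows the paper's own route: the paper presents this corollary as a special case of Theorem~\ref{th2}, with $A_1={\mathrm{Re\,}}Z$, $A_2=i\,{\mathrm{Im\,}}Z$, $g(t)=t$ and $f=m_c$. The paper likewise notes that $T\wedge cI=m_c(T)$, where $m_c(t)=\min\{t,c\}$ is nonnegative and concave on $[0,\infty)$.
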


\vskip 5pt
Here, if $T\in\bM_d^+$, $$T\wedge cI=m_c(T), \qquad   m_c(t)=\min\{t,c\},$$ and $m_c(t)$  is nonnegative and concave on $[0,\infty)$. We shall consider the infimum $A\wedge B$ of general matrices $A,B\in \bM_d^+$ in Section 3.

A companion result of Corollary \ref{corq} is:
\vskip 5pt
\begin{prop} Let $\{A_k\}_{k=1}^m$ be normal matrices in $\bM_d$ and $\alpha>0$. Then there exist unitary matrices $\{V_i\}_{i=1}^d$  in $\bM_d$ such that 
$$
\left|\sum_{k=1}^m A_k \right|^{\alpha} \le \frac{1}{d} \sum_{i=1}^d V_i\left\{\sum_{k=1}^m |A_k|\right\}^{\alpha}\!\!V_i^*.
$$
\end{prop}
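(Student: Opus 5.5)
The plan is to reduce the statement to a submajorization $\left|\sum_k A_k\right|^{\alpha}\prec_w \left\{\sum_k |A_k|\right\}^{\alpha}$ and then apply Lemma \ref{L1} with $g(t)=t$. That lemma, with the identity function, turns any submajorization $X\prec_w Y$ in $\bM_d^+$ into
$$X\le \frac1d\sum_{i=1}^d V_iYV_i^*$$
for suitable unitaries $V_1,\dots,V_d$. So everything rests on proving that submajorization for every $\alpha>0$.

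First I will treat the base case $\alpha=1$, i.e. $\left|\sum_k A_k\right|\prec_w \sum_k |A_k|$. This is Theorem \ref{th2}'s underlying norm inequality $\left\|f\left(\left|\sum_k A_k\right|\right)\right\|\le\left\|\sum_k f(|A_k|)\right\|$ with $f(t)=t$, quoted from \cite{Bpams}. Alternatively, Corollary \ref{corq} with $q=1$ already gives the averaged inequality, and taking Ky Fan $k$-norms (which are subadditive and unitarily invariant) yields the submajorization. Write $X=\left|\sum_k A_k\right|$ and $Y=\sum_k|A_k|$, so that $X\prec_w Y$ with $X,Y\ge 0$.

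Next I split according to $\alpha$.
\begin{itemize}
\item For $\alpha\ge 1$, the function $t^\alpha$ is convex and nondecreasing on $[0,\infty)$. Lemma \ref{L1} (or its consequence \eqref{fund}) then gives $X^\alpha\prec_w Y^\alpha$, and in fact directly the averaged form with $g(t)=t^\alpha$.
\item For $0<\alpha<1$, $t^\alpha$ is concave, so \eqref{fund} does not apply and this is the main obstacle. Here I would instead apply Theorem \ref{th2} with $f(t)=t^\alpha$, which is nondecreasing and concave, and $g(t)=t$. This gives $X^\alpha\prec_w \sum_k|A_k|^\alpha$. That is not what we want, since $\sum_k |A_k|^\alpha$ is generally not dominated by $\left(\sum_k|A_k|\right)^\alpha$.
\end{itemize}

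So for $\alpha<1$ a different route is needed. I would prove directly that $\lambda_j^{\downarrow}(X)\le \lambda_j^{\downarrow}(Y)$ fails in general, so instead I aim for $X^\alpha\prec_w Y^\alpha$ via a unitary-orbit domination. The first thing to try is the known fact for normal $A_k$ that $\left|\sum_k A_k\right|\le \frac12\left(U\left(\sum_k|A_k|\right)U^*+V\left(\sum_k|A_k|\right)V^*\right)$ for some unitaries $U,V$, as in \cite{Bpams} (the two-unitary triangle inequality). If that holds, then operator monotonicity and concavity of $t^\alpha$ give
$$X^\alpha\le\left(\tfrac12(UYU^*+VYV^*)\right)^\alpha,$$
and a concave unitary Jensen inequality (the reverse of \eqref{vNB}) bounds the right-hand side by an average of unitary conjugates of $Y^\alpha$. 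Taking Ky Fan norms gives $X^\alpha\prec_w Y^\alpha$.

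The last step is to finish with Lemma \ref{L1} for $g(t)=t$, which yields the $d$ unitaries. The delicate point is securing that two-unitary bound, equivalently the log-majorization-type control needed below exponent $1$.
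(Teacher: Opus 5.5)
Your reduction (prove $\left|\sum_k A_k\right|^{\alpha}\prec_w\left\{\sum_k|A_k|\right\}^{\alpha}$, then apply Lemma \ref{L1} with $g(t)=t$) is exactly the paper's; the paper simply quotes this submajorization from \cite[Corollary 2.10]{BLtv}. Your case $\alpha\ge1$ is correct: take $\alpha=1$ from \cite{Bpams}, then apply Lemma \ref{L1} with $g(t)=t^\alpha$.

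The case $0<\alpha<1$, however, has a genuine gap. The concave counterpart of \eqref{vNB} reads $f\bigl(\tfrac12(UYU^*+VYV^*)\bigr)\ge\tfrac12\{\cdots\}$. It is a \emph{lower} bound for $f$ of an average, so it cannot dominate $M^\alpha$, where $M:=\tfrac12(UYU^*+VYV^*)$, by unitary conjugates of $Y^\alpha$. In fact $M^\alpha\prec_w Y^\alpha$ is false in general. Take $Y=\mathrm{diag}(1,0)$, $U=I$ and $V$ the flip. Then $M=\tfrac12 I$ and ${\mathrm{Tr\,}}M^\alpha=2^{1-\alpha}>1={\mathrm{Tr\,}}Y^\alpha$. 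Since $M\prec Y$, a concave power only gives the supermajorization $M^\alpha\prec^w Y^\alpha$ (compare Lemma \ref{L2}), which is the opposite of what you need. So a two-unitary bound $X\le M$ with $M\prec Y$ is the wrong kind of information below exponent $1$, whether or not you can secure it.

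What is missing is multiplicative control, and this is where normality enters. For any $A\in\bM_d$ with polar decomposition $A=W|A|$,
$$\begin{pmatrix}|A|&A^*\\A&|A^*|\end{pmatrix}=\begin{pmatrix}I\\W\end{pmatrix}|A|\begin{pmatrix}I&W^*\end{pmatrix}\ge0.$$
Sum over $k$ and use $|A_k^*|=|A_k|$. This gives $\begin{pmatrix}Y&Z^*\\Z&Y\end{pmatrix}\ge0$ with $Z=\sum_kA_k$ and $Y=\sum_k|A_k|$. Hence $Z=Y^{1/2}KY^{1/2}$ for some contraction $K$. Horn's inequality then gives
$$\prod_{j=1}^k\lambda_j^{\downarrow}(|Z|)\le\prod_{j=1}^k\lambda_j^{\downarrow}(Y)\quad\text{for all }k.$$
Since $t\mapsto e^{\alpha t}$ is convex and increasing, this weak log-majorization yields $|Z|^\alpha\prec_w Y^\alpha$ for every $\alpha>0$ at once, with no case split. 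Lemma \ref{L1} with $g(t)=t$ then finishes, as you planned.
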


\vskip 5pt
\begin{proof} From  \cite[Corollary 2.10]{BLtv} we have 
$$
\left|\sum_{k=1}^m A_k \right|^{\alpha}\prec_w \left\{\sum_{k=1}^m |A_k|\right\}^{\alpha}.
$$
Lemma \ref{L1} with $g(t)=t$
 completes the proof.
\end{proof}

\subsection{Symmetric moduli}

For a matrix $Z\in\bM_d$, its left and right moduli $|Z^*|$ and $|Z|$ are the positive part in the polar decompositions
$$
Z=|Z^*|U=|Z^*|^{1/2}U|Z|^{1/2}=U|Z|.
$$
These moduli occur in a number of important operator inequalities. There is no reason to privilege the left or the right modulus. Hence, the consideration of symmetrized versions, or means of these two moduli, should bring several interesting results. 
The symmetric modulus and the quadratic symmetric modulus of $Z\in\bM_d$ are respectively defined as
$$
|Z|_{\sym}:= \frac{|Z|+|Z^*|}{2}, \qquad
|Z|_{\qsym}:=\sqrt{\frac{|Z|^2+|Z^*|^2}{2}}.
$$
Note that $$|Z|_{\qsym}=\sqrt{|{\mathrm{Re\,}}Z|^2+ |{\mathrm{Im\,}}Z|^2}$$
and
 \begin{equation}\label{compsqs}
|Z|_{\sym}\le |Z|_{\qsym}
\end{equation} 
by operator concavity of $\sqrt{t}$. Very recently Teng Zhang obtained a    triangle  inequality for the quadratic modulus \cite[Theorem 1.9]{TZsym}:

\vskip 5pt\noindent
\begin{theorem}\label{thZ}  Let $A,B\in\bM_d$. Then, for some unitaries $U,V\in\bM_d$,
$$
|A+B|_{\qsym} \le U|A|_{\qsym}U^* +V|B|_{\qsym}V^*.
$$
\end{theorem}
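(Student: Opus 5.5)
The plan is to reduce Theorem \ref{thZ} to a subadditivity statement for a positive block matrix, and then apply the known triangle inequality for moduli of Hermitian (block) matrices. To $Z\in\bM_d$ I associate the Hermitian matrix
$$
\widehat Z:=\begin{pmatrix} 0 & Z\\ Z^* & 0\end{pmatrix}\in\bM_{2d},\qquad |\widehat Z|=\begin{pmatrix} |Z^*| & 0\\ 0& |Z|\end{pmatrix}.
$$
Then $\widehat{A+B}=\widehat A+\widehat B$. For Hermitian (more generally, normal) matrices the Thompson-type triangle inequality $|\widehat A+\widehat B|\le W_1|\widehat A|W_1^*+W_2|\widehat B|W_2^*$ holds with unitaries $W_1,W_2\in\bM_{2d}$. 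However, this inequality controls the direct sum $|Z^*|\oplus|Z|$, not the quadratic mean $\sqrt{(|Z|^2+|Z^*|^2)/2}$. So the quadratic structure has to be brought in differently.

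The natural device is the row/column trick. Write $|Z|_{\qsym}^2=\tfrac12(Z^*Z+ZZ^*)=T_Z^*T_Z$, where
$$
T_Z:=\frac{1}{\sqrt2}\begin{pmatrix} Z\\ Z^*\end{pmatrix}\in\bM_{2d,d},
$$
so that $|T_Z|=|Z|_{\qsym}$. The key point is that $T_{A+B}=T_A+T_B$ is linear. Padding $T_Z$ with a zero $2d\times d$ block gives a square matrix $\tilde T_Z\in\bM_{2d}$ with $|\tilde T_Z|=|Z|_{\qsym}\oplus 0$. I would then apply Thompson's triangle inequality for general (not necessarily normal) matrices,
$$
|X+Y|\le U|X|U^*+V|Y|V^*,
$$
in $\bM_{2d}$ with $X=\tilde T_A$ and $Y=\tilde T_B$. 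This gives
$$
|A+B|_{\qsym}\oplus 0\le U(|A|_{\qsym}\oplus0)U^*+V(|B|_{\qsym}\oplus0)V^*.
$$

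The main obstacle is compressing this back to $\bM_d$, since the unitaries live in $\bM_{2d}$ and do not preserve the first summand. Compressing to the top-left corner yields $|A+B|_{\qsym}\le P_1+P_2$, where $P_1=C_1^*|A|_{\qsym}C_1$ for a contraction $C_1$, and similarly for $P_2$. The issue is that a contraction congruence $C^*XC$ is only dominated in eigenvalues by $X$ (that is, $\lambda_j^\downarrow(C^*XC)\le\lambda_j^\downarrow(X)$), not by a unitary conjugate of $X$ in the Löwner sense directly.

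I would handle this as follows. Each $P_i$ satisfies $\lambda_j^{\downarrow}(P_i)\le\lambda_j^\downarrow(\cdot)$ of the corresponding $|\cdot|_{\qsym}$ for every $j$. Hence there is a unitary $U_i\in\bM_d$ with
$$
P_i\le U_i|\cdot|_{\qsym}U_i^*,
$$
obtained by aligning eigenbases via $U_i$, which maps the eigenvectors of the qsym matrix onto those of $P_i$ in matching order. Summing the two inequalities gives the statement. If Thompson's inequality is not available in the rectangular/padded form, the fallback is to prove the $2d$ inequality through Fan-type eigenvalue estimates. One would use the minimax characterization $\lambda_j(|X+Y|)$ and the submultiplicativity $s_{i+j-1}(X+Y)\le s_i(X)+s_j(Y)$ only in combination with the known construction of Thompson's unitaries via polar decomposition, $|X+Y|=W^*(X+Y)$, followed by the standard $2\times2$ block positivity argument.
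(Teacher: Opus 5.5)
The paper gives no proof of this statement: it is quoted from Zhang \cite{TZsym} as motivation. Your argument therefore has to stand on its own, and it does. It is a correct, self-contained proof from Thompson's inequality \cite{T}.

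The key observation is that $T_Z=\frac{1}{\sqrt2}\begin{pmatrix} Z\\ Z^*\end{pmatrix}$ is additive in $Z$ and satisfies $T_Z^*T_Z=\frac12(Z^*Z+ZZ^*)$. Hence $|T_Z|=|Z|_{\qsym}$, and the statement is exactly Thompson's inequality applied to $T_A,T_B$. The remaining steps all check out:
\begin{itemize}
\item Your padding gives $|\tilde T_Z|=|Z|_{\qsym}\oplus 0$.
\item Compressing to the upper-left corner preserves $\le$.
\item The compressed term is $K|A|_{\qsym}K^*$, where $K$ is the upper-left $d\times d$ block of $U$ and hence a contraction.
\item $K|A|_{\qsym}K^*$ has the same spectrum as $|A|_{\qsym}^{1/2}K^*K|A|_{\qsym}^{1/2}\le |A|_{\qsym}$. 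This gives $\lambda_j^{\downarrow}(K|A|_{\qsym}K^*)\le\lambda_j^{\downarrow}(|A|_{\qsym})$ for every $j$.
\item Matching eigenbases then yields a unitary conjugate of $|A|_{\qsym}$ that dominates $K|A|_{\qsym}K^*$ in the L\"owner order.
\end{itemize}
What your route buys is the insight that Zhang's inequality is an immediate corollary of Thompson's. This makes literal the paper's description of it as a ``symmetrized version'' of Thompson's inequality. The same argument also gives the $m$-term version $|\sum_k A_k|_{\qsym}\le\sum_k U_k|A_k|_{\qsym}U_k^*$.

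The padding and compression can be bypassed by running Thompson's polar-decomposition argument directly on the $2d\times d$ matrices. Write $T_A+T_B=W|T_A+T_B|$ with $W$ a $2d\times d$ partial isometry, and let $s_j$ denote singular values. Then
\begin{itemize}
\item $|T_A+T_B|=\mathrm{Re}(W^*T_A)+\mathrm{Re}(W^*T_B)$, and
\item $\lambda_j^{\downarrow}(\mathrm{Re}(W^*T_A))\le s_j(W^*T_A)\le s_j(T_A)=\lambda_j^{\downarrow}(|A|_{\qsym})$.
\end{itemize}

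You should drop two parts of the write-up:
\begin{itemize}
\item The opening detour through $\widehat Z$, which you abandon anyway.
\item The closing ``fallback'' paragraph. You apply Thompson's inequality to square matrices in $\bM_{2d}$, which is exactly its classical form, so no fallback is needed. As written it is also imprecise: $s_{i+j-1}(X+Y)\le s_i(X)+s_j(Y)$ is a subadditivity inequality, not a submultiplicativity one.
\end{itemize}
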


\vskip 5pt
This  is a symmetrized version of the famous Thompson's inequality \cite{T}.  Zhang's result
    was motivated by  the following  special case   \cite{BLsym}:

\vskip 5pt
\begin{cor}\label{symq} Let $Z\in\bM_d$. Then, for some unitaries $U,V\in\bM_d$,
$$
 |Z|_{\qsym} \le U |{\mathrm{Re\,}}Z| U^* + V |{\mathrm{Im\,}}Z|V^*.
$$
\end{cor}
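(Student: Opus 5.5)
Corollary \ref{symq} follows from Theorem \ref{thZ} by splitting $Z$ into its Cartesian parts. The plan is to write
$$
Z={\mathrm{Re\,}}Z+i\,{\mathrm{Im\,}}Z=A+B,\qquad A:={\mathrm{Re\,}}Z,\quad B:=i\,{\mathrm{Im\,}}Z .
$$
Both summands are normal matrices: $A$ is Hermitian and $B$ is skew-Hermitian. Theorem \ref{thZ} then gives unitaries $U,V$ with
$$
|Z|_{\qsym}\le U|A|_{\qsym}U^*+V|B|_{\qsym}V^* .
$$

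The next step is to compute the quadratic symmetric moduli of $A$ and $B$. For any normal $N$ we have $N^*N=NN^*$, hence $|N|=|N^*|$. It follows that
$$
|N|_{\qsym}=\sqrt{\frac{|N|^2+|N|^2}{2}}=|N| .
$$
Applied to $A$, this gives $|A|_{\qsym}=|{\mathrm{Re\,}}Z|$. Applied to $B$, it gives $|B|_{\qsym}=|i\,{\mathrm{Im\,}}Z|$. Moreover
$$
(i\,{\mathrm{Im\,}}Z)^*(i\,{\mathrm{Im\,}}Z)=({\mathrm{Im\,}}Z)^2,
$$
so $|B|_{\qsym}=|{\mathrm{Im\,}}Z|$. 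Substituting these two identities into the inequality from Theorem \ref{thZ} yields exactly the statement, with the same unitaries $U,V$.

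There is no genuine obstacle once Theorem \ref{thZ} is available. The only point needing care is to verify that $|N|_{\qsym}=|N|$ for normal $N$, and this is immediate from the definition of $|\cdot|_{\qsym}$.
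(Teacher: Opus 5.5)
Your proof is correct and matches the paper, which presents this corollary as the special case of Theorem \ref{thZ} obtained from the Cartesian decomposition $Z={\mathrm{Re\,}}Z+i\,{\mathrm{Im\,}}Z$; by normality, the quadratic symmetric moduli of the two summands reduce to $|{\mathrm{Re\,}}Z|$ and $|{\mathrm{Im\,}}Z|$. The result first appeared in \cite{BLsym}, before Zhang's theorem, so it was originally proved without Theorem \ref{thZ}, but within this paper's framework your specialization is the intended argument.
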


Here \eqref{compsqs} shows that we may replace the left-hand by $|Z|_{\sym}$. 
Despite  this fact  Theorem \ref{thZ} does not hold for  the symmetric modulus $|\cdot|_{\sym}$.   Two matrices $A,B\in\bM_3$ are given in \cite{Z2} with $$\lambda_1^{\downarrow}(|A+B|_{\sym})=\sqrt{2} \left\{ \lambda_1^{\downarrow}(|A|_{\sym})+\lambda_1^{\downarrow}( |B|_{\sym})\right\}.$$ Here the constant $\sqrt{2}$ cannot be augmented;  a result of \cite{BLtriang} states a weak form of the triangle inequality as follows.

\vskip 5pt
\begin{theorem}\label{cornor}   Let $\{X_k\}_{k=1}^m$ be in $\bM_d$. Then, 
$$
\left|\sum_{k=1}^m X_k\right|_{\sym} \prec_w \sqrt{2} \sum_{k=1}^m \left|X_k\right|_{\sym}.
$$
\end{theorem}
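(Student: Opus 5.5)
The plan is to pass to $2d\times 2d$ Hermitian block matrices, where the left and right moduli sit side by side, and to estimate Ky Fan functionals directly. Directly means that we must \emph{not} first bound each summand and then add: Ky Fan functionals are subadditive, so a bound of the form $\sum_k U_k(\cdot)U_k^*$ cannot be converted into submajorization by $\sum_k |X_k|_{\sym}$. For this reason iterating Theorem \ref{thZ} together with \eqref{compsqs} does not close the argument, and the estimate has to be made for the whole sum at once.

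Set $Y=\sum_{k=1}^m X_k$ and, for each $k$,
$$
H_k=\begin{pmatrix}0&X_k\\ X_k^*&0\end{pmatrix}, \qquad H=\sum_{k=1}^m H_k .
$$
Then $|H_k|=|X_k^*|\oplus|X_k|$ and $|H|=|Y^*|\oplus|Y|$. By Ky Fan's maximum principle,
$$
\sum_{j=1}^r\lambda_j^{\downarrow}(|Y|_{\sym})=\max_P \frac12{\mathrm{Tr\,}} Q|H| ,
$$
where $P$ runs over rank $r$ projections in $\bM_d$ and $Q=P\oplus P$. Write $|H|=SH$ with $S={\mathrm{sign}}(H)$, a Hermitian unitary commuting with $H$. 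This gives
$$
{\mathrm{Tr\,}} Q|H|=\sum_{k=1}^m {\mathrm{Re\,}}{\mathrm{Tr\,}} QSH_k .
$$
For each term, factor $H_k=|H_k|^{1/2}J_k|H_k|^{1/2}$ with $J_k$ a Hermitian unitary, and apply Cauchy--Schwarz for the trace inner product:
$$
|{\mathrm{Tr\,}} QSH_k|\le \bigl({\mathrm{Tr\,}} Q|H_k|\bigr)^{1/2}\bigl({\mathrm{Tr\,}} SQS\,J_k|H_k|J_k\bigr)^{1/2}.
$$
The first factor equals $\bigl(2\,{\mathrm{Tr\,}} P|X_k|_{\sym}\bigr)^{1/2}$, which is exactly the quantity we want.

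The main obstacle is the second factor. Here $SQS$ is a rank $2r$ projection, but it is not of the form $P'\oplus P'$, so it is not directly controlled by $|X_k|_{\sym}$. The point where the constant $\sqrt2$ must enter is the following comparison. For a rank $2r$ projection $R$ in $\bM_{2d}$ and $K=|X^*|\oplus|X|$, the bound
$$
{\mathrm{Tr\,}} RK\le 2\max_{\operatorname{rank}P'=r}{\mathrm{Tr\,}} (P'\oplus P')K=4\sum_{j\le r}\lambda_j^{\downarrow}(|X|_{\sym})
$$
is false in general. What I would try first is to exploit the off-diagonal structure of $H_k$. Conjugating by $\frac{1}{\sqrt2}\begin{pmatrix}I&I\\ -I&I\end{pmatrix}$ exchanges the block-diagonal and off-diagonal parts, and I would use it to show that
$$
\max_P {\mathrm{Tr\,}} SQS\,J_k|H_k|J_k\le 2\,{\mathrm{Tr\,}} P_k|X_k|_{\sym}
$$
for a suitable rank $r$ projection $P_k$. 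If that fails, I would switch to the quadratic route. Write $\sqrt2\,|Y|_{\qsym}=|T|$ with $T=\begin{pmatrix}Y\\ Y^*\end{pmatrix}=\sum_k T_k$. The Bourin--Uchiyama concave inequality, applied to $t\mapsto\sqrt t$, gives $(A^2+B^2)^{1/2}\prec_w A+B$ for $A,B\ge0$, which yields the single-term estimate $|X|_{\qsym}\prec_w\sqrt2\,|X|_{\sym}$. I would then try to obtain a joint version, $\bigl(\sum_k(A_k^2+B_k^2)\bigr)^{1/2}$-type bounds compared with $\sum_k(A_k+B_k)$, so that the passage from $\qsym$ to $\sym$ is made after summing rather than term by term.

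Granting the bound on the second factor, the Cauchy--Schwarz step gives
$$
|{\mathrm{Tr\,}} QSH_k|\le 2\,{\mathrm{Tr\,}}(\cdot)|X_k|_{\sym}\cdot\sqrt2 ,
$$
up to the choice of projections. I would arrange a common maximizing $P$ so that the terms can be added. Summing over $k$ and dividing by $2$ then yields
$$
\sum_{j\le r}\lambda_j^{\downarrow}(|Y|_{\sym})\le\sqrt2\sum_{j\le r}\lambda_j^{\downarrow}\Bigl(\sum_k|X_k|_{\sym}\Bigr)
$$
for every $r$, which is the claimed submajorization.
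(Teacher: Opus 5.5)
The paper does not prove this statement; it quotes it from \cite{BLtriang}, so your argument has to stand on its own. As written it does not. The decisive estimate on the second Cauchy--Schwarz factor is only ``granted'', and your diagnosis of it is backwards: the bound you call false is true, while the substitute target $\max_P{\mathrm{Tr\,}}SQS\,|H_k|\le 2\,{\mathrm{Tr\,}}P_k|X_k|_{\sym}$ fails.

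Here is a counterexample to that target. In $\bM_2$ let $W$ be the rotation by $\pi/4$, $X_1=e_1e_2^*$, and $X_2=W-X_1$. Then $Y=W$, $S=\begin{pmatrix}0&W\\ W^*&0\end{pmatrix}$, and $SQS=WPW^*\oplus W^*PW$. For $P=vv^*$ with $v=(e_1-e_2)/\sqrt2$ one gets $WPW^*=e_1e_1^*$ and $W^*PW=e_2e_2^*$, so ${\mathrm{Tr\,}}SQS|H_1|=2$. On the other hand, $2\,{\mathrm{Tr\,}}P_1|X_1|_{\sym}=1$ for every rank one $P_1$.

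There are two further problems. Bounds with $k$-dependent projections $P_k$ cannot be added, for the very subadditivity reason you raise in your first paragraph. And the factor $\sqrt2$ in your ``granting'' line does not follow from your target; it would come from the factor $4$ you rejected.

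The gap closes inside your own framework. Put $\mathcal A=\sum_k|X_k^*|$, $\mathcal B=\sum_k|X_k|$, $K=\sum_k|H_k|=\mathcal A\oplus\mathcal B$, and $\Lambda_r=\sum_{j\le r}\lambda_j^{\downarrow}\bigl(\sum_k|X_k|_{\sym}\bigr)$. Take $J_k$ to be a unitary sign of $H_k$, so it commutes with $|H_k|$. Then your second factor is ${\mathrm{Tr\,}}SQS|H_k|$, with the same projection $SQS$ for every $k$. So apply Cauchy--Schwarz in $\mathbb{R}^m$ instead of bounding each term separately:
\[
{\mathrm{Tr\,}}Q|H|\le\sum_{k=1}^m\bigl({\mathrm{Tr\,}}Q|H_k|\bigr)^{1/2}\bigl({\mathrm{Tr\,}}SQS|H_k|\bigr)^{1/2}\le\bigl({\mathrm{Tr\,}}QK\bigr)^{1/2}\bigl({\mathrm{Tr\,}}SQS\,K\bigr)^{1/2}.
\]
The first factor satisfies ${\mathrm{Tr\,}}QK={\mathrm{Tr\,}}P(\mathcal A+\mathcal B)\le 2\Lambda_r$. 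For the second, let $R$ be any rank $2r$ projection. Ky Fan's principle together with $K\le(\mathcal A+\mathcal B)\oplus(\mathcal A+\mathcal B)$ gives
\[
{\mathrm{Tr\,}}RK\le\sum_{j\le 2r}\lambda_j^{\downarrow}(K)\le 2\sum_{j\le r}\lambda_j^{\downarrow}(\mathcal A+\mathcal B)=4\Lambda_r .
\]
Hence ${\mathrm{Tr\,}}Q|H|\le 2\sqrt2\,\Lambda_r$. Dividing by $2$ and maximizing over $P$ yields $\sum_{j\le r}\lambda_j^{\downarrow}(|Y|_{\sym})\le\sqrt2\,\Lambda_r$ for every $r$, which is the theorem. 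The constant $\sqrt2$ is exactly the mismatch between $2\Lambda_r$ and $4\Lambda_r$. Neither the off-diagonal conjugation nor the quadratic route is needed.
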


\vskip 5pt 
We can use Theorem \ref{th1} to get a  further comparison than \eqref{compsqs}.
The next result shows that the two symmetric moduli are equivalent up to the optimal constant 
$\sqrt{2}$ 
 in the unitary-orbit order.

\vskip 5pt
\begin{cor} Let $Z\in \bM_d$. Then there exist unitary matrices $\{V_i\}_{i=1}^d$  in $\bM_d$ such that 
$$
|Z|_{\qsym} \le \frac{ \sqrt{2}}{d}  \sum_{i=1}^d V_i |Z|_{\sym}V_i^*.
$$
The constant $\sqrt{2}$ cannot be diminished.
\end{cor}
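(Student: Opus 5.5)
The plan is to reduce the statement to a submajorization and then apply Lemma \ref{L1} with $g(t)=t$. Write $A=|Z|$ and $B=|Z^*|$, both in $\bM_d^+$. Then
$$
|Z|_{\qsym}=\frac{1}{\sqrt2}\,(A^2+B^2)^{1/2}, \qquad \sqrt2\,|Z|_{\sym}=\frac{1}{\sqrt2}\,(A+B).
$$
So it suffices to prove
$$
(A^2+B^2)^{1/2}\prec_w A+B .
$$
Lemma \ref{L1}, with $\Omega=[0,\infty)$ and $g(t)=t$, then gives unitaries $V_1,\ldots,V_d$ with $(A^2+B^2)^{1/2}\le \frac1d\sum_i V_i(A+B)V_i^*$. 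Multiplying by $1/\sqrt2$ yields exactly the stated inequality.

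To get the submajorization, I will use the norm inequality from \cite{Bpams} quoted in the proof of Theorem \ref{th2}. For normal matrices $N,M$ and a nonnegative nondecreasing concave $f$ on $[0,\infty)$, it states
$$
\|f(|N+M|)\|\le\|f(|N|)+f(|M|)\|
$$
for every symmetric norm. Take $N=A^2$, $M=B^2$ (positive, hence normal) and $f(t)=t^{1/2}$. Then $|N+M|=A^2+B^2$, $f(|N|)=A$ and $f(|M|)=B$, so
$$
\|(A^2+B^2)^{1/2}\|\le\|A+B\|
$$
for all symmetric norms. By the Fan dominance principle this is the required $\prec_w$ relation. This step is where the real content lies. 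The naive route, using \eqref{jot} with $h(t)=t^2$ to get $A^2+B^2\prec_w(A+B)^2$, fails because taking square roots does not preserve $\prec_w$. Applying the concave subadditivity result directly to the squares avoids this problem.

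For optimality of $\sqrt2$, take $Z=\begin{pmatrix}0&1\\0&0\end{pmatrix}$, padded by zeros when $d>2$. Then $|Z|=\mathrm{diag}(0,1)$ and $|Z^*|=\mathrm{diag}(1,0)$. Hence $|Z|_{\sym}=\tfrac12 I_2$ and $|Z|_{\qsym}=\tfrac1{\sqrt2}I_2$ on the relevant $2\times2$ block, with zeros elsewhere. Suppose $|Z|_{\qsym}\le \frac{c}{d}\sum_i V_i|Z|_{\sym}V_i^*$ for some $c$ and some unitaries. Taking traces gives ${\mathrm{Tr\,}}|Z|_{\qsym}=\sqrt2$ on the left and $c\,{\mathrm{Tr\,}}|Z|_{\sym}=c$ on the right. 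Therefore $c\ge\sqrt2$, so the constant cannot be diminished.
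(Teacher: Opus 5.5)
Your proof is correct and follows the paper's route. The paper applies Theorem \ref{th2} with $A_1=|Z|^2$, $A_2=|Z^*|^2$, $f(t)=\sqrt{t}$, $g(t)=t$, and you have unpacked that theorem's proof (the norm inequality from \cite{Bpams}, Fan dominance, then Lemma \ref{L1}); your optimality example is the paper's rescaled by $1/2$, with the trace comparison and the zero-padding for $d>2$ made explicit.
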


\vskip 5pt
\begin{proof} To see that $\sqrt{2}$ is optimal, it suffices to pick
$$
Z=\begin{pmatrix} 0&2 \\ 0&0\end{pmatrix}
$$
as
$$
|Z|_{\qsym} =  \begin{pmatrix} \sqrt{2}&0 \\ 0&\sqrt{2}\end{pmatrix}, \qquad |Z|_{\sym} =  \begin{pmatrix} 1&0 \\ 0&1\end{pmatrix}.
$$
To establish the main conclusion, apply Theorem \ref{th2} with two matrices $A_1=|Z|^2$, $A_2=|Z^*|^2$, and the functions $f(t)=\sqrt{t}$ and $g(t)=t$. This gives
$$
\sqrt{|Z|^2+|Z^*|^2} \le \frac{ 1}{d}  \sum_{i=1}^d V_i \left(|Z|+|Z^*|\right)V_i^*
$$
and dividing both sides by $\sqrt{2}$ yields the result.
\end{proof}

\subsection{Supermajorization}

Let $A,B\in\bM_d(\Omega)$. The supermajorization relation $A\prec ^{w} B$ means that $-A\prec_w -B$. In other words,
$$
\sum_{j=1}^k\lambda_j^{\uparrow}(A) \ge  \sum_{j=1}^k\lambda_j^{\uparrow}(B)
$$
for all $k=1,\ldots, d$, where $\lambda_k^{\uparrow}(\cdot)$ stand for the eigenvalues arranged in nondecreasing order. We then have the following form of Lemma \ref{L1}.

\vskip 5pt
\begin{lemma}\label{L2} Let $A,B\in\bM_d(\Omega)$ satisfy $A\prec^w B$ and let $f(t)$ be a nondecreasing concave function on $\Omega$. Then 
there exist unitary matrices $\{V_i\}_{i=1}^d$  in $\bM_d$ such that 
$$
f(A)\ge \frac{1}{d}\sum_{i=1}^d V_i f(B)V_i^*.
$$
\end{lemma}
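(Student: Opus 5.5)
The plan is to reduce Lemma \ref{L2} to Lemma \ref{L1} by a sign change. Supermajorization $A\prec^w B$ means by definition $-A\prec_w -B$, and $-A,-B$ lie in $\bM_d(-\Omega)$, where $-\Omega$ is again an interval. Define $g(t):=-f(-t)$ on $-\Omega$. If $f$ is nondecreasing and concave on $\Omega$, then $g$ is nondecreasing and convex on $-\Omega$. Indeed, $t\mapsto -t$ reverses order and $f$ preserves it, so $f(-t)$ is nonincreasing and $-f(-t)$ is nondecreasing. Also, $f(-t)$ is concave as the composition of a concave function with an affine map, so its negative is convex.

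We then apply Lemma \ref{L1} to the pair $-A\prec_w -B$ in $\bM_d(-\Omega)$ with this $g$. This gives unitaries $V_1,\dots,V_d$ with
$$
g(-A)\le \frac1d\sum_{i=1}^d V_i\, g(-B)\,V_i^*.
$$
By the functional calculus, $g(-A)=-f(A)$ and $g(-B)=-f(B)$. Substituting and multiplying by $-1$, which reverses the Löwner order, yields
$$
f(A)\ge \frac1d\sum_{i=1}^d V_i f(B)V_i^*,
$$
which is the claim.

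No step here is really an obstacle. The only points to check are that the transformed function keeps the right monotonicity and convexity, and that negation maps $\bM_d(\Omega)$ onto $\bM_d(-\Omega)$ compatibly with the functional calculus. Both are immediate.

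If one preferred a self-contained argument, one could instead mirror the proof of Theorem \ref{th1} directly. That route would modify the \emph{largest} eigenvalue of $B$, rather than the smallest, to turn $A\prec^w B$ into a majorization $A\prec B'$ with $B'\le B$. It would then apply Schur--Horn and Bhatia's diagonal-averaging identity to write $A$ as a unitary average of $B'$, and finish with the concave, monotone version of the unitary Jensen inequality \cite[Corollary 2.4]{BL}. The reduction via $g(t)=-f(-t)$ avoids redoing this and is the route I would take.
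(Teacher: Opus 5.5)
Your reduction is correct and is exactly the paper's proof: apply Lemma~\ref{L1} to $-A\prec_w -B$ with the nondecreasing convex function $-f(-t)$ on $-\Omega$. One small slip is in your optional direct sketch, which you do not rely on: $A\prec^w B$ forces ${\mathrm{Tr\,}}A\ge{\mathrm{Tr\,}}B$, so raising the top eigenvalue of $B$ gives $B'\ge B$, not $B'\le B$, and $B'\ge B$ is precisely the direction needed to conclude $f(A)\ge\cdots$.
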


\begin{proof} Since $-A\prec_w -B$, we may apply Lemma \ref{L1} to $-A$, $-B$, and the nondecreasing convex function $-f(-t)$ on $-\Omega$.
This proves the lemma.
\end{proof}

\section{Spectral order}

Until now we have dealt with the usual order $\le$ on the Hermitian part of $\bM_d$. Another remarkable order is the spectral order $\preceq$, introduced by Olson in 1971 \cite{Ols} to endow the Hermitian operators on a Hilbert space with a lattice structure extending that of projections.

\subsection{Basic properties }

We confine ourselves to the setting of $\bM_d^+$ and   motivate the spectral order $\preceq$ as an order on $\bM_d^+$  such that $A\preceq B$ is equivalent  to the natural functional monotonicity property $(ii)$ below. Olson proved the following proposition.

\begin{prop}\label{OO}
 For  $A,B\in\bM_d^+$, the following conditions are equivalent:
\begin{itemize}
\item[($i$)] $\ind_{(x,\infty)}(A) \le  \ind_{(x,\infty)}(B)$ for all $x> 0$;
\item[($ii$)] $f(A)\le f(B)$ for all nondecreasing functions $f:[0,\infty)\to[0,\infty)$.
\item[($iii$)] $A^n\le B^n$ for all integers $n=1,2,\ldots$.
\end{itemize}
When these conditions hold, $A$ is spectrally dominated by $B$, and we write $A\preceq B$.
\end{prop}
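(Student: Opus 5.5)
I would prove the cycle of implications $(i)\Rightarrow(ii)\Rightarrow(iii)\Rightarrow(i)$. The implication $(ii)\Rightarrow(iii)$ is immediate, since $t\mapsto t^n$ is nondecreasing and nonnegative on $[0,\infty)$.

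\emph{$(i)\Rightarrow(ii)$.} The point is that a nondecreasing $f\ge 0$ is a positive combination of indicators of upper half-lines, up to a harmless constant. Let $0\le a_1<\dots<a_r$ be the points of $\sigma(A)\cup\sigma(B)$. Only the values of $f$ at these points matter, so I would replace $f$ by the step function
$$
\tilde f(t)=f(a_1)+\sum_{j=2}^r \bigl(f(a_j)-f(a_{j-1})\bigr)\ind_{(x_j,\infty)}(t),
$$
where $x_j\in(a_{j-1},a_j)$. This $\tilde f$ agrees with $f$ on both spectra, so $f(A)=\tilde f(A)$ and $f(B)=\tilde f(B)$. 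Each coefficient $f(a_j)-f(a_{j-1})$ is nonnegative, and each $x_j>0$. Hence $(i)$ gives, term by term, $\tilde f(A)\le\tilde f(B)$, and the constant term $f(a_1)I$ is common to both sides.

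\emph{$(iii)\Rightarrow(i)$.} This is the main obstacle, since it must convert powers into spectral projections. I would fix $x>0$ and use the normalized powers $(A/x)^n$.

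Write $P=\ind_{(x,\infty)}(A)$ and $Q=\ind_{(x,\infty)}(B)$. Let $Q^\perp=I-Q$ be the spectral projection of $B$ for $[0,x]$. From $A^n\le B^n$ we get $Q^\perp A^nQ^\perp\le Q^\perp B^nQ^\perp\le x^nQ^\perp$. Take a unit vector $\xi\in\mathrm{ran}\,P\cap\mathrm{ran}\,Q^\perp$. Then
$$
\langle A^n\xi,\xi\rangle\ge \lambda_{\min}(A|_{\mathrm{ran} P})^n>x^n\cdot c^n
$$
for some fixed $c>1$, which contradicts $\langle A^n\xi,\xi\rangle\le x^n$. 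Hence $\mathrm{ran}\,P\cap\ker Q=\{0\}$. However, this only shows that the ranges meet trivially, and we need $P\le Q$, that is, $\mathrm{ran}\,P\subseteq\mathrm{ran}\,Q$. So we need a limit argument rather than a single vector.

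For this I would use $n$-th roots. The inequality $A^n\le B^n$ gives $\|A^{n/2}\eta\|\le\|B^{n/2}\eta\|$ for every vector $\eta$. Take $\eta\in\mathrm{ran}\,P$ and decompose $\eta=Q\eta+Q^\perp\eta$. Then
$$
\|A^{n/2}\eta\|\ge (x c)^{n/2}\|\eta\|,
\qquad
\|B^{n/2}\eta\|\le \|B\|^{n/2}\|Q\eta\|+x^{n/2}\|Q^\perp\eta\|.
$$
This bounds $\|Q\eta\|$ from below, but only by a quantity that decays exponentially in $n$, so it does not yet force $Q^\perp\eta=0$.

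To close the gap, I would instead use a cleaner route. Olson's standard argument passes from $A^n\le B^n$ for all $n$ to $p(A)\le p(B)$ for suitable positive combinations. A better option is operator monotonicity of $t^{1/n}$. Since $t\mapsto t^{k/n}$ is operator monotone for $k\le n$, the hypothesis $A^n\le B^n$ yields $A^k\le B^k$ for $k\le n$. More usefully, it yields $(A^n)^{s}\le(B^n)^{s}$ for $0\le s\le1$, so $A^t\le B^t$ for every real $t\ge0$.

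Now let $E_t$ denote the spectral projection onto $\mathrm{ran}\,\ind_{(x,\infty)}$ and consider $\lim_{t\to\infty}\bigl((A/x)^t\wedge I\bigr)$. A more concrete formulation uses the functions $\phi_t(s)=\frac{(s/x)^t}{1+(s/x)^t}$, which converge to $\ind_{(x,\infty)}$ on the finite spectra, provided $x$ avoids the spectra. The map $X\mapsto X(I+X)^{-1}$ is operator monotone on $\bM_d^+$. Hence $A^t\le B^t$ gives $\phi_t(A)\le\phi_t(B)$, and letting $t\to\infty$ gives $\ind_{(x,\infty)}(A)\le\ind_{(x,\infty)}(B)$ for all $x\notin\sigma(A)\cup\sigma(B)$.

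The remaining finitely many values of $x$ follow by right-continuity. For $x$ in the spectra, take $x'\downarrow x$: then $\ind_{(x',\infty)}(A)=\ind_{(x,\infty)}(A)$ for $x'$ close enough to $x$, and likewise for $B$. This completes $(iii)\Rightarrow(i)$.
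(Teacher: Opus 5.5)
Your proof is correct. Your $(i)\Rightarrow(ii)$ is the paper's step-function argument, made explicit by interpolating $f$ on $\sigma(A)\cup\sigma(B)$. For $(iii)\Rightarrow(i)$, however, your route is genuinely different.

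The paper proceeds in four steps:
\begin{itemize}
\item[(1)] It reduces to invertible matrices, passing to $A+I$, $B+I$ via operator monotonicity of $(t^{1/n}+1)^n$.
\item[(2)] It inverts twice to get $(A^{-1}+\cdots+A^{-n})^{-1}\le(B^{-1}+\cdots+B^{-n})^{-1}$.
\item[(3)] It lets $n\to\infty$ to obtain $(A-I)_+\le(B-I)_+$.
\item[(4)] It applies operator monotonicity of $t^q$ with $q\searrow 0$, then rescales.
\end{itemize}

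You instead apply a single map, $X\mapsto X(I+X)^{-1}=I-(I+X)^{-1}$, to $(A/x)^n\le(B/x)^n$ and let $n\to\infty$. This map is operator monotone simply because inversion is operator decreasing.

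What your route buys:
\begin{itemize}
\item It needs no reduction to the invertible case.
\item It needs no L\"owner--Heinz inequality. Your detour through real exponents $A^t\le B^t$ is superfluous, since integer $n$ already works directly from $(iii)$. Without that detour, your argument uses nothing beyond monotonicity of inversion.
\end{itemize}

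What it costs: $\phi_n(x)\to 1/2$ at the point $x$ itself, so the finitely many $x\in\sigma(A)\cup\sigma(B)$ must be handled separately. Your right-continuity argument does this correctly. By contrast, the paper's functions $(t-1)_+^q$ converge to $\ind_{(1,\infty)}$ at every point. They also give the intermediate inequality $(A-xI)_+\le(B-xI)_+$ as a by-product.

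In a final write-up, delete the two abandoned vector arguments. Also delete the stray sentence introducing $E_t$ and $(A/x)^t\wedge I$: it is never used, and it clashes with the paper's notation $\wedge$ for the spectral infimum.
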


\vskip 5pt
Another interesting equivalent condition can be found in \cite{HJS}. 

The implication 
$(i)\Rightarrow(ii)$ is easy as we may assume that $f(t)=c_0+\sum_{i=1}^n c_i\ind_{(x_i,\infty)}$ for some positive numbers $x_1,\ldots, x_n$ and $c_0,\ldots, c_n$. $(ii)\Rightarrow(iii)$ is trivial.

We give a new, short  proof of the nontrivial implication $(iii)\Rightarrow (i)$. Suppose $(iii)$ holds and  $B$ is invertible (and so is $A$).  Since $1/t$ is operator decreasing, ($iii$) ensures
$$
\frac{1}{A^{-1}+\cdots+A^{-n}} \le \frac{1}{B^{-1}+\cdots+B^{-n}}.
$$
Since $1/(t^{-1}+\cdots+ t^{-n})$ pointwise converges to $(t-1)_+:=\max\{0, t-1\}$ as $n\to\infty$, we infer that
$$
(A-1)_+ \le (B-1)_+.
$$
Therefore, for all $0<q<1$,
$$
(A-1)^q_+ \le (B-1)^q_+,
$$
and letting $q\searrow 0$ we obtain $\ind_{(1,\infty)}(A)\le \ind_{(1,\infty)}(B)$. Replacing $A$ and $B$ by $x^{-1}A$ and $x^{-1}B$ then gives the condition ($i$) for all $x\ge 0$ when $A$ and $B$ are invertible. The general case follows as $A^n\le B^n$ entails $(A+ I)^n\le (B+ I)^n$ because $(x^{1/n}+1)^n$ is operator monotone. Thus $(iii)$ ensures $(i)$ for $A+ I$ and  $B+ I$, which is equivalent to $(i)$ for $A$ and $B$ since $\ind_{(x,\infty)}(A+I)=
\ind_{(x-1,\infty)}(A)$.

\vskip 5pt
\begin{remark} Let $A,B\in\bM_d^+$ be invertible. Condition $(iii)$ in Proposition \ref{OO} shows that
$$
A\preceq B \iff B^{-1}\preceq A^{-1},
$$
and, in general, $A\preceq A+B$ does not hold.
\end{remark}

\vskip 5pt
\begin{remark} If $A,B\in\bM_d^+$, then the family of projections $\ind_{(x,\infty)}(A) \vee  \ind_{(x,\infty)}(B)$ is nonincreasing with respect to $x>0$. Hence there exists a unique $C\in\bM_d^+$ such that
$$
\ind_{(x,\infty)}(C)=\ind_{(x,\infty)}(A) \vee  \ind_{(x,\infty)}(B).
$$
Condition $(i)$ in Proposition \ref{OO} implies that $A\preceq C$ and $B\preceq C$, while any $X\in\mathbb M_d^+$ satisfying $A\preceq X$ and $B\preceq X$ must also satisfy $C\preceq X$.
 Consequently, $C=:A\vee  B$ is the least upper bound of $A$ and $B$. One defines in a similar way the greatest lower bound $D=:A\wedge B$ with
$$
\ind_{(x,\infty)}(D)=\ind_{(x,\infty)}(A) \wedge  \ind_{(x,\infty)}(B).
$$
Therefore $(M_d^+,\preceq, \vee,\wedge)$ is a lattice.
\end{remark}

\vskip 5pt
\begin{example}\label{funny} Consider two matrices in $\bM_2^+$
$$
A=\begin{pmatrix} 1&0 \\ 0&0
\end{pmatrix},
\qquad
B=\begin{pmatrix} 1&1 \\ 1&1
\end{pmatrix}.
$$
Then their supremum $A\vee B$ is characterized by:

\noindent
1) If $0<x\le 1$, then
$$
\ind_{(x,\infty)}(A\vee B) = \begin{pmatrix} 1&0 \\ 0&0
\end{pmatrix}\vee \begin{pmatrix} 1/2&1/2 \\ 1/2&1/2
\end{pmatrix}=
\begin{pmatrix} 1&0 \\ 0&1
\end{pmatrix}.
$$
2) If $1<x<2$, then
$$
\ind_{(x,\infty)}(A\vee B) = \begin{pmatrix} 0&0 \\ 0&0
\end{pmatrix}\vee \begin{pmatrix} 1/2&1/2 \\ 1/2&1/2
\end{pmatrix}=
 \begin{pmatrix} 1/2&1/2 \\ 1/2&1/2
\end{pmatrix}.
$$
3) If $2\le x$, then $\ind_{(x,\infty)}(A\vee B) =0$.

Therefore
$$
A\vee B =  1I+(2-1)\begin{pmatrix} 1/2&1/2 \\ 1/2&1/2
\end{pmatrix}= \begin{pmatrix} 3/2&1/2 \\ 1/2&3/2\end{pmatrix}.
$$
Hence, in this example,
$
{\mathrm{Tr\,}} A\vee B= {\mathrm{Tr\,}} (A+B)
$,
this seems not surprizing as $A$ and $B$ are scalar multiples  of two noncommuting rank one projections.
\end{example}

\vskip 5pt
\begin{question} In $\bM_d^+$, does the equivalence
$
{\mathrm{Tr\,}} A\vee B= {\mathrm{Tr\,}} (A+B) \iff A\wedge B=0
$
hold ? Note that $A\wedge B=0$ means that  the ranges of $A$ and $B$ have their intersection reduced to $\{0\}$.
\end{question}

\subsection{Sum and supremum}

For two positive numbers, $a\vee b=\lim_{n\to\infty}(a^n+b^n)^{1/n}$. In 1979, Kato \cite{K} obtained the following matrix analogue.

\vskip 5pt
\begin{prop}  Let $A,B\in\bM_d^+$. Then 
$$
A\vee B =\lim_{n\to\infty} \left(A^n+B^n\right)^{1/n}.
$$
\end{prop}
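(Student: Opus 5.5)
Write $C_n:=(A^n+B^n)^{1/n}$ and $C:=A\vee B$. The plan is to work with the spectral projections $\ind_{(x,\infty)}(\cdot)$, since $C$ is defined through them. In finite dimensions a limit of matrices in $\bM_d^+$ is determined by its eigenvalues together with their spectral projections. It therefore suffices to show that every limit point $L$ of $(C_n)$ equals $C$. Boundedness comes first: $A^n+B^n\le 2\max\{\|A\|,\|B\|\}^n I$, so $\|C_n\|\le 2^{1/n}\max\{\|A\|,\|B\|\}$ and limit points exist.

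\textbf{Step 1 (lower bound, $A\preceq L$ and $B\preceq L$).} Fix $k\in\bN$. For $n\ge k$, $A^n\le A^n+B^n=C_n^n$, and $t\mapsto t^{k/n}$ is operator monotone. Hence
$$A^k\le C_n^k.$$
Passing to the limit along a subsequence gives $A^k\le L^k$ for every $k$, and likewise $B^k\le L^k$. By Proposition \ref{OO} ($iii\Rightarrow i$) this means $A\preceq L$ and $B\preceq L$. Since $C$ is the least upper bound, $C\preceq L$.

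\textbf{Step 2 (upper bound, $L\preceq C$).} Since $A\preceq C$ and $B\preceq C$, Proposition \ref{OO} gives $A^n\le C^n$ and $B^n\le C^n$, so
$$A^n+B^n\le 2C^n.$$
Operator monotonicity of $t^{1/n}$ then yields $C_n\le 2^{1/n}C$, and hence $L\le C$. This gives only the usual order, not the spectral one. The same argument with powers helps: $C_n^k=(A^n+B^n)^{k/n}\le 2^{k/n}C^k$ holds only when $k\le n$, but $k$ is fixed while $n\to\infty$, so the restriction is harmless. We obtain $L^k\le C^k$ for all $k$, and so $L\preceq C$.

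\textbf{Step 3 (conclusion).} The spectral order is antisymmetric. Indeed, $L\preceq C$ and $C\preceq L$ give equal spectral projections $\ind_{(x,\infty)}$ for every $x>0$, and hence $L=C$. Every limit point therefore equals $C$, and the whole sequence converges to $A\vee B$.

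The main obstacle I expect is Step 2. One has to see that fixing $k$ and letting $n\to\infty$ makes $t^{k/n}$ eventually operator monotone. Step 1 needs the same device, with the operator monotonicity of $t^{k/n}$ for $k\le n$ as the key point.
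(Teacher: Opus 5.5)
Your proof is correct, and it differs from the paper's only in how convergence is obtained. Your Steps 1 and 2 are the paper's two estimates, both with condition ($iii$) of Proposition \ref{OO}:
\begin{itemize}
\item[(a)] operator monotonicity of $t^{k/n}$ for $k\le n$ applied to $A^n\le A^n+B^n$;
\item[(b)] the same monotonicity applied to $A^n+B^n\le 2X^n$ for an upper bound $X$, where you take $X=A\vee B$.
\end{itemize}

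The paper first shows that $\bigl(\frac{A^n+B^n}{2}\bigr)^{1/n}$ is nondecreasing in the usual order, via operator concavity of $t^{n/m}$ for $m>n$. The limit therefore exists by monotone convergence before any spectral-order argument. The paper then shows this limit is below every upper bound, so it is the least upper bound, and this does not need the projection construction of $A\vee B$ in the Remark.

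You instead use boundedness and compactness. You show that every cluster point $L$ satisfies both $A\vee B\preceq L$ and $L\preceq A\vee B$, and conclude by antisymmetry and uniqueness of the cluster point. This avoids the operator concavity step, but relies on the a priori existence of $A\vee B$ from the Remark. Running your Step 2 with an arbitrary upper bound $X$ would remove that dependence, since least upper bounds are unique. The paper's route gives the extra information that $\bigl(\frac{A^n+B^n}{2}\bigr)^{1/n}$ increases to $A\vee B$ in the usual order.

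Your antisymmetry argument in Step 3 can be shortened. Taking $n=1$ in ($iii$), $L\preceq C$ and $C\preceq L$ already give $L\le C\le L$.
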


An interesting generalization is \cite[Theorem 9]{AW}. For convenience of the reader we prove Kato's limit theorem.

\vskip 5pt
\begin{proof} Let $m>n\ge 1$. Operator concavity of $t\mapsto t^{n/m}$ ensures
$$
\frac{A^n+B^n}{2} \le \left(\frac{A^m+B^m}{2}\right)^{n/m}.
$$
As $t\mapsto t^{1/n}$
is operator monotone,
$$
\left(\frac{A^n+B^n}{2}\right)^{1/n} \le \left(\frac{A^m+B^m}{2}\right)^{1/m}.
$$
Hence we have  a bounded nondecreasing sequence with respect to the usual order in $\bM_d^+$. Since $2^{1/n}\to 1$, its limit is $\lim_{n\to\infty}(A^n+B^n)^{1/n}$. Fix an integer $k>0$. For $n>k$,
$
A^k, B^k \le (A^n+B^n)^{k/n} $  because $A^n\le A^n+B^n$ and $t^{k/n}$ is operator monotone.
Thus
$$
A^k, B^k \le \left(\lim_{n\to\infty}(A^n+B^n)^{1/n}\right)^k$$
Hence $(iii)$ in Proposition \ref{OO} implies
$$
A,B \preceq \lim_{n\to\infty}(A^n+B^n)^{1/n}.$$

 On the other hand, suppose $A,B\preceq X$ and fix $m\in\bN$. By $(iii)$ in Proposition \ref{OO}, for every integer $n$,
\begin{equation}\label{convOO}
 \frac{A^n+B^n}{2} \le X^n.
\end{equation}
If further $n\ge m$,
$$
 \left(\frac{A^n+B^n}{2}\right)^{m/n} \le X^m
$$
as  $t^{m/n}$ is operator monotone. Letting $n\to \infty$ yields
$$
\left\{\lim_{n\to\infty } \left(\frac{A^n+B^n}{2}\right)^{1/n}\right\}^m \le X^m
$$
for every integer $m$. So, again by this crucial condition $(iii)$, 
 $$
\lim_{n\to\infty } \left(\frac{A^n+B^n}{2}\right)^{1/n}\preceq X.
$$
Hence $A\vee B=\lim_{n\to\infty } \left(\frac{A^n+B^n}{2}\right)^{1/n}$.
\end{proof}

\vskip 5pt
\begin{cor}\label{sumvsmax} Let  $A,B\in\bM_d^+$. Then there exist unitary matrices $\{V_i\}_{i=1}^d$  in $\bM_d$ such that 
$$
A\vee B \le \frac{ 1}{d}  \sum_{i=1}^d V_i (A+ B) V_i^*.
$$
\end{cor}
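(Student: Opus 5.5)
The plan is to reduce the statement to Lemma~\ref{L1} with $g(t)=t$ on $\Omega=[0,\infty)$. It then suffices to prove the submajorization
$$
A\vee B\prec_w A+B .
$$
Lemma~\ref{L1} turns this immediately into the existence of unitaries $V_1,\ldots,V_d$ with $A\vee B\le \frac1d\sum_i V_i(A+B)V_i^*$.

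To get the submajorization I will use Kato's limit, $A\vee B=\lim_{n\to\infty}(A^n+B^n)^{1/n}$. Since $\prec_w$ is a closed condition (finitely many eigenvalue inequalities, and eigenvalues depend continuously on the matrix), it is enough to show that
$$
(A^n+B^n)^{1/n}\prec_w A+B \qquad\text{for every } n\ge 1 .
$$
This is exactly the matrix analogue of the left inequality in \eqref{elem} with $p=n$. I will obtain it from inequality \eqref{jot} (that is, \cite[Corollary 3.2]{BLjot}) applied with $h(t)=t^n$, which is convex and nondecreasing with $h(0)=0$. This gives $A^n+B^n\prec_w (A+B)^n$.

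Next I need to pass from $A^n+B^n\prec_w (A+B)^n$ to the $1/n$-th powers. The map $t\mapsto t^{1/n}$ is concave, so it does not preserve $\prec_w$ in general; this is the step I expect to be the main obstacle. To get around it I will use the stronger log-type comparison that holds here. Write $X=A^n+B^n$ and $Y=(A+B)^n$. By the Rotfel'd-type results of \cite{BLjot}, the inequality $\|h(A)+h(B)\|\le\|h(A+B)\|$ holds for all symmetric norms and all convex $h$ with $h(0)=0$. I will apply it with $h(t)=t^{n}$ and also with $h(t)=t^{n}\cdot\phi$-type truncations, to obtain $\lambda_j^{\downarrow}$-wise domination via Ky Fan $k$-norms. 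The cleaner first attempt, however, is to use $h(t)=(t^n)$ together with the operator-monotone route: we have $A^n\le X$ and $B^n\le X$, and from $X\preceq$-type information one gets $\lambda_j^{\downarrow}(X^{1/n})=\lambda_j^{\downarrow}(X)^{1/n}$.

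If that fails, the fallback is to prove $A\vee B\prec_w A+B$ directly from the projection description $\ind_{(x,\infty)}(A\vee B)=\ind_{(x,\infty)}(A)\vee\ind_{(x,\infty)}(B)$, via the layer-cake formula
$$
\sum_{j\le k}\lambda_j^{\downarrow}(C)=\max_{\operatorname{rank}P=k}\int_0^\infty {\mathrm{Tr\,}}\bigl(P\,\ind_{(x,\infty)}(C)\bigr)\,dx .
$$
In that version the key inequality to establish is
$$
{\mathrm{Tr\,}}\bigl(P(E\vee F)\bigr)\le {\mathrm{Tr\,}}(PE)+{\mathrm{Tr\,}}(PF)
$$
for projections $E,F$ and a rank-$k$ projection $P$. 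This is false pointwise in general, so the layer-cake route needs care. For that reason I will commit primarily to the Kato-limit route, and will concentrate the effort on justifying the passage to $1/n$-th powers there.
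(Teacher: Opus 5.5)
Your reduction is sound: Kato's limit, closedness of $\prec_w$, and then Lemma~\ref{L1} with $g(t)=t$ would give the corollary. Taking the limit before applying Lemma~\ref{L1} even spares you the compactness argument.

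The gap is that the only substantive step, $(A^n+B^n)^{1/n}\prec_w A+B$, is never proved, and the route you commit to cannot deliver it. From \eqref{jot} with $h(t)=t^n$ you get only $A^n+B^n\prec_w (A+B)^n$. A submajorization cannot be pushed through the concave map $t\mapsto t^{1/n}$: for example $\mathrm{diag}(1,1)\prec\mathrm{diag}(2,0)$, yet the square roots have traces $2>\sqrt2$.

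The repairs you sketch also fail. Take the projections $A=\mathrm{diag}(1,0)$ and $B=\frac12\begin{pmatrix}1&1\\1&1\end{pmatrix}$, so that $A^n+B^n=A+B$ has eigenvalues $1\pm 1/\sqrt2$.
\begin{itemize}
\item For $n\ge 2$, $\det(A^n+B^n)=1/2>2^{-n}=\det\bigl((A+B)^n\bigr)$. So there is no weak log-majorization $A^n+B^n\prec_{w\log}(A+B)^n$ to exploit.
\item $\lambda_2^{\downarrow}(A^2+B^2)=1-1/\sqrt2>(1-1/\sqrt2)^2=\lambda_2^{\downarrow}\bigl((A+B)^2\bigr)$. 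So there is no eigenvalue-wise domination either.
\end{itemize}
The layer-cake fallback rests on a projection inequality that, as you yourself note, is false.

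The missing idea is to use the concave Rotfel'd-type inequality applied to $A^n,B^n$, rather than the convex one applied to $A,B$. For positive (indeed normal) $N,M$ and a nonnegative nondecreasing concave $f$ on $[0,\infty)$, one has $\|f(N+M)\|\le\|f(N)+f(M)\|$ for all symmetric norms \cite{Bpams}. With $N=A^n$, $M=B^n$ and $f(t)=t^{1/n}$ this reads $\|(A^n+B^n)^{1/n}\|\le\|A+B\|$, which is exactly $(A^n+B^n)^{1/n}\prec_w A+B$.

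This is what the paper does. It applies Theorem~\ref{th2} with $g(t)=t$ and $f(t)=t^{1/n}$ to get $(A^n+B^n)^{1/n}\le\frac1d\sum_i V_i(A+B)V_i^*$ for each $n$. It then lets $n\to\infty$, using Kato's theorem and compactness of the unitary group. If you insert this inequality, the rest of your plan (closedness of $\prec_w$, then Lemma~\ref{L1}) becomes a complete proof.
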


\vskip 5pt
\begin{proof} By Theorem \ref{th2} with $g(t)=t$ and $f(t)=t^{1/n}$,
$$
(A^{n} + B^{n})^{1/n} \le \frac{ 1}{d}  \sum_{i=1}^d V_i (A+ B)V_i^*
$$
for some unitaries $V_i$. 
 Letting $n\to \infty$,  Kato's characterization of the  supremum and the compactness of the unitary group complete the proof.
\end{proof}

\vskip 5pt To introduce a  conjecture to be stated soon,  we formulate the following equivalent form of  Corollary \ref{sumvsmax}.

\vskip 5pt
\begin{cor} Let $A,B\in\bM_d^+$. Then, for all symmetric norms,
\begin{equation}\label{normmax}
 \| A\vee B\|\le \| A+ B\|,
\end{equation}
equivalently,
\begin{equation}\label{submax} 
 A\vee B \prec_{w} A+ B.
\end{equation}
\end{cor}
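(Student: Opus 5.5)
The plan is to derive this corollary directly from Corollary \ref{sumvsmax}, using the Fan dominance principle.

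First, Corollary \ref{sumvsmax} gives unitaries $V_1,\ldots,V_d$ with
$$
A\vee B \le \frac{1}{d}\sum_{i=1}^d V_i(A+B)V_i^*.
$$
Both sides are positive semidefinite. By Weyl's monotonicity principle, $X\le Y$ implies $\lambda_j^{\downarrow}(X)\le\lambda_j^{\downarrow}(Y)$ for every $j$. For positive matrices a symmetric norm is a monotone function of the eigenvalue vector, so $X\le Y$ gives $\|X\|\le\|Y\|$. Therefore
$$
\|A\vee B\| \le \Big\|\frac{1}{d}\sum_{i=1}^d V_i(A+B)V_i^*\Big\| \le \frac{1}{d}\sum_{i=1}^d\|V_i(A+B)V_i^*\| = \|A+B\|.
$$
The second step is the triangle inequality and the last uses unitary invariance. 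This proves \eqref{normmax}.

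Next, the equivalence of \eqref{normmax} and \eqref{submax} is exactly the Fan dominance principle, as already invoked in the proof of Theorem \ref{th1}. Both $A\vee B$ and $A+B$ are positive, so their singular values are their eigenvalues. The Ky Fan $k$-norms of these matrices are the partial sums $\sum_{j\le k}\lambda_j^{\downarrow}$. Hence \eqref{normmax}, applied to the Ky Fan norms, gives \eqref{submax}. Conversely, \eqref{submax} implies \eqref{normmax} for all symmetric norms by Fan dominance.

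One could also obtain \eqref{submax} directly, without norms, since each Ky Fan functional $X\mapsto\sum_{j\le k}\lambda_j^{\downarrow}(X)$ is subadditive and unitarily invariant. There is no real obstacle here. The only point needing care is that monotonicity of symmetric norms under $\le$ requires positivity, and both matrices are positive.
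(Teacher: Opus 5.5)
Your proposal is correct and follows the paper's route: the paper gives no separate proof and presents this corollary as an equivalent reformulation of Corollary \ref{sumvsmax}. Your argument (Weyl monotonicity, the triangle inequality and unitary invariance, then Fan dominance for the equivalence of \eqref{normmax} and \eqref{submax}) is the standard deduction the paper leaves implicit.
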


\vskip 5pt
For sake of completness, we note that Ando \cite[Lemma 6.15]{Ando} observed the infimum counterpart of Kato's theorem:

\vskip 5pt
\begin{prop}  Let $A,B\in\bM_d^+$ be invertible. Then 
$$
A\wedge B =\lim_{n\to\infty} \left(A^{-n}+B^{-n}\right)^{-1/n}.
$$
\end{prop}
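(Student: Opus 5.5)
The plan is to reduce the statement to Kato's theorem through the order-reversing map $X\mapsto X^{-1}$, using the Remark that $A\preceq B\iff B^{-1}\preceq A^{-1}$ for invertible positive matrices. Since $A$ and $B$ are invertible, so are $A^{-1}$ and $B^{-1}$. Kato's proposition applied to them gives
$$
A^{-1}\vee B^{-1}=\lim_{n\to\infty}\left(A^{-n}+B^{-n}\right)^{1/n}.
$$
The limit is invertible because it dominates $(A^{-n})^{1/n}=A^{-1}$ in the usual order. Inversion is continuous on invertible matrices, so
$$
\lim_{n\to\infty}\left(A^{-n}+B^{-n}\right)^{-1/n}=\left(A^{-1}\vee B^{-1}\right)^{-1}.
$$

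It then remains to show that $(A^{-1}\vee B^{-1})^{-1}=A\wedge B$. Set $C=A^{-1}\vee B^{-1}$; it is invertible, and so is $A\wedge B$. First, $A^{-1}\preceq C$ and the Remark give $C^{-1}\preceq A$, and likewise $C^{-1}\preceq B$. So $C^{-1}$ is a lower bound of $A$ and $B$.

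For maximality, take any $X\in\bM_d^+$ with $X\preceq A$ and $X\preceq B$. If $X$ is invertible, the Remark gives $A^{-1},B^{-1}\preceq X^{-1}$. Hence $C\preceq X^{-1}$, and so $X\preceq C^{-1}$. If $X$ is singular, I would use condition (i) of Proposition \ref{OO}. For $x>0$, $\ind_{(x,\infty)}(X)\le \ind_{(x,\infty)}(X+\epsilon I)$, and for small $\epsilon$ one still has $X+\epsilon I\preceq A,B$. Indeed, the spectral projections of $X$ above level $x$ only see eigenvalues of $A$ and $B$ that are at least $\lambda_d(A)$ and $\lambda_d(B)$. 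Alternatively, I would replace $X$ by $X\vee \delta I$ with $\delta=\min(\lambda_d(A),\lambda_d(B))$: this is invertible, is still below $A$ and $B$ (since $\delta I\preceq A,B$ by (i)), and dominates $X$. Then $X\preceq X\vee\delta I\preceq C^{-1}$. Thus $C^{-1}$ is the greatest lower bound, that is, $C^{-1}=A\wedge B$.

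The main obstacle is this singular-$X$ technicality, and the $X\vee\delta I$ trick handles it cleanly. A second route is to argue directly with projections: $\ind_{(x,\infty)}(Y^{-1})=I-\ind_{[1/x,\infty)}(Y)$, and complementation exchanges $\vee$ and $\wedge$ of projections by De Morgan. This would verify the identity at the level of spectral projections, with care needed over open versus closed intervals, which are handled by right-continuity in $x$.
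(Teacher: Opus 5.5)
Your proof is correct and is essentially the paper's argument: apply Kato's theorem to $A^{-1},B^{-1}$ and use that inversion reverses $\preceq$ on positive definite matrices. Your $X\vee\delta I$ device, with $\delta=\min\{\lambda_d^{\downarrow}(A),\lambda_d^{\downarrow}(B)\}$, neatly handles singular lower bounds, which the paper passes over by working only with positive definite matrices.

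Discard your first suggestion, though: $X+\epsilon I\preceq A,B$ can fail for every $\epsilon>0$. For instance, take $A=B=\mathrm{diag}(2,1)$ and $X=\mathrm{diag}(2,0)$. So keep only the $X\vee\delta I$ route.
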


\vskip 5pt
This is an immediate consequence of Kato's theorem combined with the fact that the inverse reverses the spectral order. For positive definite matrices,
$$
Y\preceq X\preceq A, B \iff A^{-1}, B^{-1} \preceq X^{-1}\preceq Y^{-1}
$$
Consequently $A\wedge B= (A^{-1}\vee B^{-1})^{-1}=\lim_{n\to\infty} \left(A^{-n}+B^{-n}\right)^{-1/n}$.

\subsection{Comments}

The submajorization relation \eqref{submax} complements the following supermajorization inequality due to Ando \cite[Theorem 6.16]{Ando}:
$$A\vee B\prec^{w} A^{\downarrow} \vee B^{\downarrow}.$$
On the other hand, the supermajorization 
 $A\vee B \prec^{w} A+B$ fails in general, already when $A=B$.

It would be highly desirable to extend this results to operators on infinite dimensional Hilbert spaces, possibly unbounded. Readers familiar with noncommutative integration theory in semifinite von Neumann algebras may find the following conjecture plausible.

\begin{conjecture}
The submajorization \eqref{submax} holds  in the positive cone $\mathcal{P}$ of the space of all $\tau$-measurable operators 
affiliated with a semifinite von Neumann algebra endowed with a faithful normal semifinite trace $\tau$.
Thus \eqref{normmax}  holds  for any norm respecting the submajorization in  $\mathcal{P}$.
\end{conjecture}

Coming back to the finite dimensional setting, we define for $Z\in\bM_d$ its maximal symmetric modulus and minimal symmetric modulus as, respectively,
$$
|Z|_{\vee} :=|Z|\vee |Z^*|, \quad |Z|_{\wedge} :=|Z|\wedge |Z^*|.
$$
A few  questions then occur.

\begin{question} Let $A,B\in\bM_d$. Does there exist a constant $c_{\vee}(d)$, depending only on $d$, such that  
$$
|A+B|_{\vee} \le c_{\vee}(d) \left(U|A|_{\vee} U^* + V|B|_{\vee}V^*\right)
$$
for some unitary matrices $U,V$ ? Can we choose $c_{\vee}(d)$ independent of $d$ ? 
 Can we take $c_{\vee}(d)=1$ ? The latter holds when $A$ and $B$ are commuting positive matrices.
\end{question}

\begin{conjecture} For every dimension $d$, one may choose $c_{\vee}(d)=1$.
\end{conjecture}

If one replaces the maximal modulus $|\cdot|_{\vee}$ by the minimal one $|\cdot|_{\wedge}$, the answer to the corresponding questions is negative. Letting
$$
A=\begin{pmatrix} 0&1 \\ 0&0\end{pmatrix}=B^*
$$
we have $|A+B|_{\wedge}=I$ while $|A|_{\wedge}=|B|_{\wedge}=0$.

We recall  a  related,  strong conjecture proposed by Zhang \cite{Z2}:

\begin{conjecture} If $A,B\in\bM_d$, then there exist two unitaries $U,V\in\bM_d$ such that
$$
|A+B|_{\qsym} \le \sqrt{2} \left(U|A|_{\qsym} U^* + V|B|_{\qsym}V^*\right)
$$
\end{conjecture}

\section{Orthogonal orbits}

An orthogonal matrix is a real unitary matrix. To extend our results to real matrices and orthogonal orbits, we first need the following simple observation.
For a positive integer $d$, we denote by $\omega(d)$ the smallest dyadic number ($\omega(d)=2^m$ for some integer $m$) such that $d\le \omega(d)$.

\vskip 5pt
\begin{lemma}\label{real}  There exist  orthogonal, diagonal matrices  $\{R_i\}_{i=1}^{\omega(d)}$  in $\bM_d$ such that 
$$
\Delta(X) =\frac{1}{\omega(d)}\sum_{i=1}^{\omega(d)} R_iXR_i^*
$$
for every matrix $X\in\bM_d$.
\end{lemma}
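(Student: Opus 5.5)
The plan is to build the $R_i$ from sign vectors given by the rows of a Hadamard matrix of Sylvester type, truncated to $d$ columns.

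\emph{Step 1: reduce to an orthogonality condition.} Write $n=\omega(d)=2^m$, and take the diagonal sign matrices $R_i=\mathrm{diag}(\varepsilon_i(1),\ldots,\varepsilon_i(d))$ with $\varepsilon_i(j)\in\{\pm1\}$. Each such $R_i$ is real, diagonal and orthogonal, with $R_i^*=R_i$. The $(j,k)$ entry of $\frac1n\sum_i R_iXR_i^*$ is
$$\Bigl(\frac1n\sum_{i=1}^n\varepsilon_i(j)\varepsilon_i(k)\Bigr)x_{jk}.$$
So the identity holds for every $X$ exactly when the column vectors $c_j=(\varepsilon_1(j),\ldots,\varepsilon_n(j))\in\{\pm1\}^n$, $j=1,\ldots,d$, are pairwise orthogonal. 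For $j=k$ the coefficient is automatically $1$.

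\emph{Step 2: construct the columns.} We therefore need $d$ mutually orthogonal $\pm1$ vectors in $\mathbb{R}^n$. Take the Sylvester--Hadamard matrix $H_m=H_1^{\otimes m}$, where $H_1=\begin{pmatrix}1&1\\1&-1\end{pmatrix}$. It is an $n\times n$ matrix with $\pm1$ entries and $H_m^TH_m=nI$, which follows by induction since $(H\otimes H_1)^T(H\otimes H_1)=H^TH\otimes 2I$. Because $d\le n$, its first $d$ columns are pairwise orthogonal. Set $\varepsilon_i(j)=(H_m)_{ij}$ for $i\le n$ and $j\le d$.

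An equivalent description, useful if one prefers an intrinsic argument, indexes $i$ by $s\in\{0,1\}^m$ and $j$ by distinct $t_j\in\{0,1\}^m$, which is possible since $d\le 2^m$. One then puts $\varepsilon_s(j)=(-1)^{s\cdot t_j}$. For $j\neq k$ we have $t_j\oplus t_k\neq0$, so
$$\sum_s(-1)^{s\cdot(t_j\oplus t_k)}=0,$$
because this character sum vanishes for any nonzero character.

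\emph{Step 3: conclude.} With this choice every off-diagonal entry of the average is multiplied by $0$ and every diagonal entry by $1$, which gives $\Delta(X)$. There is no real obstacle. The only point needing care is the existence of $d$ orthogonal sign vectors of length $\omega(d)$, and that is exactly why the dyadic $\omega(d)$ appears in place of $d$: in the complex case Bhatia uses the roots of unity of order $d$, which are unavailable over the reals.
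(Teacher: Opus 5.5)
Your proof is correct, and it produces the same sign matrices as the paper. The difference is in how the identity is verified. The paper works blockwise. It writes the compression onto two diagonal blocks of size $2^{m-1}$ as $\frac12\{Z+DZD\}$ with $D=\mathrm{diag}(I,-I)$, and inducts on $m$. Run with the same family on both blocks, this induction generates exactly the rows of $H_1^{\otimes m}$. The case $d<\omega(d)$ is then handled by padding $X$ with zeros in $\bM_{\omega(d)}$ and cutting the diagonal sign matrices down to their upper-left $d\times d$ corners. That truncation is precisely your restriction to the first $d$ columns of the Sylvester--Hadamard matrix.

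\emph{What the paper's route buys.} It is purely block-matrix and mirrors the standard proof that pinchings are averages of unitary conjugations.

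\emph{What your route buys.} It replaces both the induction and the padding step by one entrywise computation. More importantly, it gives an exact criterion: a family of diagonal sign matrices averages to $\Delta$ if and only if the $d$ sign columns are pairwise orthogonal.

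This criterion shows the dyadic number is not intrinsic, so your closing remark overstates things. Any $n\times d$ matrix with entries $\pm1$ and pairwise orthogonal columns works. For example, a Hadamard matrix of order $12$ gives $12<16=\omega(d)$ matrices for $9\le d\le 12$. In the other direction, for $d\ge 3$, three pairwise orthogonal sign vectors force $4\mid n$, so $n\ge 4\lceil d/4\rceil$.

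This also bears on the paper's Question~\ref{qf1}. Any orthogonal $U_1,\ldots,U_N$ with $\frac1N\sum_i U_iXU_i^*=\Delta(X)$ must be diagonal. The reason is that the $U_i/\sqrt{N}$ are Kraus operators of the pinching, so they lie in the span of the diagonal matrix units. Hence your criterion computes $\varphi(d)$ exactly; for instance $\varphi(9)=12$, which is neither $d$ nor $\omega(d)$.
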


\vskip 5pt
Since the matrices $R_k$
 are diagonal with entries $\pm1$, we have $R_k^*=R_k$. For the proof, it is convenient to denote by $I_n$ the identity of size $n$.

\vskip 5pt
\begin{proof}  Let $Z\in\bM_{2^m}$, for an integer $m\ge 1$, be a real matrix partitioned as
$$
Z=\begin{bmatrix} Z_1&K \\ L&Z_2
\end{bmatrix}
$$
where $Z_1,Z_2\in\bM_{2^{m-1}}$. Then
$$
\begin{bmatrix} Z_1&0 \\ 0&Z_2
\end{bmatrix}=
 \frac{1}{2}\left\{ I_{2^m} Z I_{2^m} + \begin{bmatrix} I_{2^{m-1}}&0 \\ 0&-I_{2^{m-1}}\end{bmatrix} Z
\begin{bmatrix} I_{2^{m-1}}&0 \\ 0&-I_{2^{m-1}}\end{bmatrix}
\right\}.
$$
This proves the lemma when $m=1$. An induction on $m$ then yields the result for every dyadic dimension  $d=\omega(d)$.

Now, let $X\in\bM_d$ be a real matrix with $d<\omega(d)$, and define $Z\in\bM_{\omega(d)}$ as
$$
Z=\begin{bmatrix} X&0 \\ 0&0 \end{bmatrix}
$$
where the right lower $0$ stands for the zero matrix in $\bM_{\omega(d)-d}$. By the first step of the proof, the diagonal $\Delta(Z)$ of $Z$ can be expressed as
$$
\Delta\left(\begin{bmatrix} X&0 \\ 0&0 \end{bmatrix}\right)=
\frac{1}{\omega(d)} \sum_{k=1}^{\omega(d)} D_k\begin{bmatrix} X&0 \\ 0&0 \end{bmatrix}D_k^*
$$
for some orthogonal, diagonal matrices $D_k\in\bM_{\omega(d)}$. Let $R_k\in\bM_d$ be the diagonal, orthogonal matrix extracted from $D_k$ by taking its $d\times d$ left upper corner. The above equation yields
$$
\Delta(X)=
\frac{1}{\omega(d)} \sum_{k=1}^{\omega(d)} R_kXR_k^*
$$
and the proof is complete.
\end{proof}

\vskip 5pt
Replacing Bhatia's identity in Section 2 by Lemma \ref{real}, we obtain real analogues of our results, with orthogonal orbits replacing unitary ones. For instance, the real version of Lemma \ref{L1} reads as:

\vskip 5pt
\begin{lemma}\label{L3} Let $A,B\in\bM_d(\Omega)$ and let $g(t)$ be a nondecreasing convex function on $\Omega$. If $A$ and $B$ are real and  $A\prec_w B$, then 
there exist ortogonal matrices $\{U_i\}_{i=1}^{\omega(d)}$  in $\bM_d$ such that 
$$
g(A)\le \frac{1}{\omega(d)}\sum_{i=1}^{\omega(d)} U_i g(B)U_i^*.
$$
\end{lemma}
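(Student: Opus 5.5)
We follow the proof of Lemma \ref{L1} (that is, the argument given for Theorem \ref{th1}), replacing each complex step by a real one.

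\textbf{Step 1: from submajorization to majorization.} Since $A\prec_w B$, we write the spectral decomposition $B=\sum_k\lambda_k^{\downarrow}(B)E_k$. Here the $E_k$ may be taken to be real rank-one projections, because $B$ is real symmetric. We lower the smallest eigenvalue to $\mu=\lambda_d^{\downarrow}(B)-{\mathrm{Tr\,}}B+{\mathrm{Tr\,}}A$ and call the result $B'$. Then $A\prec B'$ and $B'\le B$. Because $\mu\le\lambda_d^{\downarrow}(B)$, the spectrum of $B'$ may leave $\Omega$ on the left. This is harmless: since $A\prec B'$, we have $\lambda_d^{\downarrow}(A)\ge\mu$, so every matrix that enters $g$ below has spectrum in $\Omega$. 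Alternatively, one can extend $g$ to a nondecreasing convex function on a larger interval.

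\textbf{Step 2: real Schur--Horn and real averaging.} We need the real Schur--Horn theorem: if $A\prec B'$ with $A,B'$ real symmetric, then $A=U\Delta(WB'W^T)U^T$ for some orthogonal $U,W$. Diagonalize $A=U\,{\rm diag}(\lambda(A))U^T$ with $U$ orthogonal. The real Horn theorem says that a vector majorized by $\lambda(B')$ is the diagonal of a real symmetric matrix with spectrum $\lambda(B')$. This holds because the standard inductive proofs use only real plane rotations; Chan's proof, for instance, works verbatim over $\mathbb R$. I expect this to be the only point needing care.

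Next we apply Lemma \ref{real} with $X=WB'W^T$ and obtain
$$A=\frac1{\omega(d)}\sum_{i=1}^{\omega(d)}U_iB'U_i^T,\qquad U_i:=UR_iW,$$
where each $U_i$ is orthogonal.

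\textbf{Step 3: monotonicity.} Since $B'\le B$, the average above is at most $C:=\frac1{\omega(d)}\sum_iU_iBU_i^T$. As $g$ is nondecreasing, $\lambda_j(g(A))\le\lambda_j(g(C))$ for every $j$. Both matrices are real symmetric, so they can be diagonalized by orthogonal matrices, and hence $g(A)\le U_0g(C)U_0^T$ for some orthogonal $U_0$.

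\textbf{Step 4: real unitary Jensen.} It remains to bound $g(C)$ by an orthogonal average of $g(B)$. Here we need the real version of \cite[Corollary 2.4]{BL}, the Jensen-type inequality with equal weights for monotone convex $g$. Its proof goes through spectral projections and Ky Fan--type eigenvalue arguments that take place within real spaces. So for real symmetric data we obtain
$$g(C)\le V_0\Big(\frac1{\omega(d)}\sum_iU_ig(B)U_i^T\Big)V_0^T$$
with $V_0$ orthogonal.

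If one prefers not to reprove this, an alternative is available. In the real setting the inequality $g(C)\le V_0(\cdots)V_0^T$ is equivalent to the eigenvalue inequalities $\lambda_j(g(C))\le\lambda_j\big(\frac1{\omega(d)}\sum_iU_ig(B)U_i^T\big)$. These inequalities do not depend on the field, and orthogonal diagonalization then realizes the comparison by an orthogonal $V_0$. The same reasoning shows that any unitary congruence between real symmetric matrices in Step 3 can be replaced by an orthogonal one.

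Combining Steps 3 and 4 and setting $V_i:=U_0V_0U_i$, which are orthogonal, gives the lemma.
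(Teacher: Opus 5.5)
Your proposal is correct and takes the paper's route: the paper gets Lemma \ref{L3} by rerunning the argument behind Lemma \ref{L1} (the proof of Theorem \ref{th1}) with Lemma \ref{real} replacing Bhatia's identity, and the real Schur--Horn theorem is implicitly used there as well. In Step 4 you make the remaining unitary congruences orthogonal by noting that they reduce to field-independent eigenvalue inequalities between real symmetric matrices, which fills in a detail the paper leaves implicit.
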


\vskip 5pt
If $g(t)$ is convex nondecreasing, $f(t)=-g(-t)$ is concave nondecreasing. Therefore, applying the real version of Lemma \ref{L2} together with the equivalence $A\prec B\iff -B\prec -A$, we have the following full-majorization counterpart of Lemma \ref{L3}:

\vskip 5pt
\begin{lemma}\label{L4} Let $A,B\in\bM_d(\Omega)$ and let $g(t)$ be a monotone convex function on $\Omega$. If $A$ and $B$ are real and  $A\prec B$, then 
there exist orthogonal matrices $\{U_i\}_{i=1}^{\omega(d)}$  in $\bM_d$ such that 
$$
g(A)\le \frac{1}{\omega(d)}\sum_{i=1}^{\omega(d)} U_i g(B)U_i^*.
$$
\end{lemma}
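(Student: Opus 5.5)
The plan is to split according to whether $g$ is nondecreasing or nonincreasing. In each case we reduce to Lemma \ref{L3} or to the real version of Lemma \ref{L2}, using that full majorization $A\prec B$ implies both $A\prec_w B$ and $A\prec^w B$. The reason is that equality of traces turns the inequalities on the top $k$ eigenvalue sums into the complementary inequalities on the bottom $d-k$ sums.

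\emph{Case 1: $g$ nondecreasing.} Since $A\prec B$ gives $A\prec_w B$, Lemma \ref{L3} applies directly to $A,B$ and $g$. It produces orthogonal $U_1,\dots,U_{\omega(d)}$ with $g(A)\le \frac{1}{\omega(d)}\sum_i U_i g(B)U_i^*$. For completeness, Lemma \ref{L3} itself is proved exactly as Lemma \ref{L1}. One first modifies the smallest eigenvalue of $B$ to obtain a real $B'$ with $A\prec B'$, $B'\le B$, $B'$ commuting with $B$, and $B'$ still real. The real Schur--Horn theorem then gives $A=O\Delta(WB'W^T)O^T$ with $O,W$ orthogonal. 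Lemma \ref{L4} is not used here, since it is the statement being proved. Lemma \ref{real} replaces Bhatia's identity and expresses $A$ as an average of $\omega(d)$ orthogonal conjugates of $B'$. Monotonicity of $g$ and the real Jensen-type inequality from \cite{BL} finish this part; the orthogonal version holds because its proof only uses spectral decompositions and real unitaries.

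\emph{Case 2: $g$ nonincreasing.} Set $\tilde A=-A$, $\tilde B=-B$ and $\tilde g(t)=g(-t)$ on $-\Omega$. Then $\tilde g$ is convex and nondecreasing, with $\tilde g(\tilde A)=g(A)$ and $\tilde g(\tilde B)=g(B)$. The key point is that $A\prec B$ implies $-A\prec -B$: the sum of the $k$ largest eigenvalues of $-A$ equals minus the sum of the $k$ smallest eigenvalues of $A$, which is $\mathrm{Tr\,}A$ minus the sum of the $d-k$ largest. Equality of traces then transfers the inequalities. Hence $\tilde A\prec_w\tilde B$, and Lemma \ref{L3} applied to $\tilde A,\tilde B,\tilde g$ yields the conclusion.

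Equivalently, this matches the route suggested in the text: one can apply the real Lemma \ref{L2} with $f(t)=-g(-t)$, which is concave and nondecreasing, to the pair $(-A,-B)$, using $A\prec B\iff -B\prec -A$. This gives $f(-A)\ge \frac{1}{\omega(d)}\sum U_i f(-B)U_i^*$, and negating gives the claim. The main point to check carefully is the trace bookkeeping showing that full majorization yields the needed weak majorization in the right direction after negation. A secondary point is ensuring that every ingredient of Lemma \ref{L1}'s proof (Schur--Horn and the Jensen inequality of \cite{BL}) has a real/orthogonal counterpart. The real Schur--Horn theorem is classical, and the Jensen step uses only real spectral projections, so I expect no obstruction there.
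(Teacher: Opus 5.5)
Your argument is correct and is essentially the paper's. The nondecreasing case is Lemma \ref{L3}, since $A\prec B$ gives $A\prec_w B$. The nonincreasing case is reduced by negation, which is exactly what the real Lemma \ref{L2} encodes when applied with $f=-g$ and $A\prec^w B$; your Case 2 is that argument unwrapped.

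Your closing ``Equivalently'' paragraph, however, is wrong, although Case 2 does not depend on it. For nonincreasing convex $g$, the function $-g(-t)$ is concave but \emph{nonincreasing}, so Lemma \ref{L2} cannot be applied with it; the correct choice is $f=-g$ on the pair $(A,B)$. Also, the valid equivalence is $A\prec B\iff -A\prec -B$, not $-B\prec -A$.
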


\vskip 5pt
The following corollary follows either from Lemma \ref{L4} or directly from Lemma \ref{L3}, since $A$ and $B$ have the same trace.

\vskip 5pt
\begin{cor}\label{correalprec} Let $A,B\in\bM_d(\Omega)$. If $A$ and $B$ are real and  $A\prec B$, then 
there exist orthogonal matrices $\{U_i\}_{i=1}^{\omega(d)}$  in $\bM_d$ such that 
$$
A= \frac{1}{\omega(d)}\sum_{i=1}^{\omega(d)} U_i BU_i^*.
$$
\end{cor}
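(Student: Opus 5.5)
The idea is to apply Lemma \ref{L3} and Lemma \ref{L4} with the identity function $g(t)=t$, and then exploit the trace condition to turn an operator inequality into an equality. Take $\Omega$ an interval containing the spectra of $A$ and $B$, and $g(t)=t$, which is nondecreasing, convex and concave at once. Since $A\prec B$ implies $A\prec_w B$, Lemma \ref{L3} gives orthogonal matrices $U_1,\ldots,U_{\omega(d)}$ with
$$
A\le C:=\frac{1}{\omega(d)}\sum_{i=1}^{\omega(d)} U_iBU_i^*.
$$
Because each $U_i$ is orthogonal, ${\mathrm{Tr\,}} C={\mathrm{Tr\,}} B$, and ${\mathrm{Tr\,}} B={\mathrm{Tr\,}} A$ by the definition of full majorization. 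Thus $C-A$ is positive semidefinite with zero trace, so $C-A=0$, which is the claimed identity.

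For robustness I would also note the more direct route, which is really what underlies Lemma \ref{L3} for $g(t)=t$. Since ${\mathrm{Tr\,}} A={\mathrm{Tr\,}} B$, in the construction of the proof of Theorem \ref{th1} one has $\mu=\lambda_d^{\downarrow}(B)$, so $B'=B$. The Schur--Horn theorem gives $A=U\Delta(WBW^*)U^*$, and since $A,B$ are real we may take $U,W$ real orthogonal. Then Lemma \ref{real} writes $\Delta(WBW^*)$ as $\frac{1}{\omega(d)}\sum_i R_iWBW^*R_i$ with $R_i$ diagonal $\pm1$ matrices. Setting $U_i=UR_iW$ gives the equality directly.

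The only delicate point is the real Schur--Horn theorem: a real symmetric $A$ with $A\prec B$ must be orthogonally (not merely unitarily) conjugate to the diagonal of an orthogonal conjugate of $B$. This is standard, since the Schur--Horn theorem holds over the reals with orthogonal matrices (as in the proofs of \cite{M} or \cite{Chan}), and real symmetric matrices are orthogonally diagonalizable. The first argument sidesteps this by resting on Lemma \ref{L3}, which already encodes it. So the proof reduces to the trace argument.
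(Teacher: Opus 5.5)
Your first argument is exactly the paper's: apply Lemma \ref{L3} with $g(t)=t$ to get $A\le \frac{1}{\omega(d)}\sum_i U_iBU_i^*$, then use equality of traces to turn the inequality into an equality. Your second route (note that $B'=B$ when the traces agree, then apply the real Schur--Horn theorem and Lemma \ref{real}) is also correct; it simply unpacks what Lemma \ref{L3} does in this case.
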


\vskip 5pt
The next corollary is a special case:

\vskip 5pt
\begin{cor} Let $\Phi:\bM_d\to\bM_d$ be a positive linear map, unital and trace preserving.  If $S\in\bM_d$ is real symmetric, then there exist orthogonal matrices  $\{U_i\}_{i=1}^{\omega(d)}$  in $\bM_d$ such that 
$$
\Phi(S) =\frac{1}{\omega(d)}\sum_{i=1}^{\omega(d)} U_iSU_i^*.
$$
\end{cor}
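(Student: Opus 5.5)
The plan is to deduce the statement from Corollary \ref{correalprec} by showing that $\Phi(S)\prec S$, with both matrices real symmetric. The argument has three steps.

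First, $\Phi(S)$ is real symmetric. A positive linear map sends Hermitian matrices to Hermitian matrices, so $\Phi(S)$ is Hermitian. It is not automatic that $\Phi(S)$ is real, however, and I would handle this as follows. If $\Phi$ is only assumed to be a positive map on $\bM_d$, I would rely on the reading, implicit in the phrase ``real analogues of our results,'' that $\Phi$ maps real matrices to real matrices. Failing that, I would replace $\Phi$ by the map $X\mapsto\frac12\{\Phi(X)+\overline{\Phi(\overline X)}\}$. This map is still positive, unital and trace preserving, and it agrees with $\Phi$ on $S$ whenever $\Phi(S)$ is real. This realness issue is the only delicate point. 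Everything else is standard.

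Second, I would show $\Phi(S)\prec S$. Take $\Omega$ to be an interval containing the spectrum of $S$. Since $\Phi$ is unital and positive, $\Phi$ preserves $\bM_d(\Omega)$: from $aI\le S\le bI$ we get $aI\le\Phi(S)\le bI$. For the majorization I would use Ky Fan's maximum principle. Write
$$\sum_{j=1}^k\lambda_j^{\downarrow}(\Phi(S))={\mathrm{Tr\,}}P\Phi(S)={\mathrm{Tr\,}}\Phi^*(P)S$$
for a rank $k$ spectral projection $P$ of $\Phi(S)$. Because $\Phi$ is trace preserving, its adjoint $\Phi^*$ is unital. Because $\Phi$ is unital, $\Phi^*$ is trace preserving. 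Because $\Phi$ is positive, $\Phi^*$ is positive. Hence $0\le\Phi^*(P)\le I$ and ${\mathrm{Tr\,}}\Phi^*(P)=k$. The elementary bound ${\mathrm{Tr\,}}TS\le\sum_{j\le k}\lambda_j^{\downarrow}(S)$ for $0\le T\le I$ with ${\mathrm{Tr\,}}T=k$ then gives the partial sum inequalities. Equality of traces comes directly from trace preservation, so $\Phi(S)\prec S$.

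Third, I would apply Corollary \ref{correalprec} with $A=\Phi(S)$ and $B=S$. It produces orthogonal $U_1,\dots,U_{\omega(d)}$ with $\Phi(S)=\frac1{\omega(d)}\sum_i U_iSU_i^*$, which is the claim.
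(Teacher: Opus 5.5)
Your proof is correct and follows the paper's route: establish $\Phi(S)\prec S$, then invoke Corollary \ref{correalprec}. The only difference is how the majorization is obtained. You pass to the adjoint $\Phi^*$, which is again positive, unital and trace preserving, and use Ky Fan's maximum principle; this works directly for Hermitian $S$. The paper instead reduces to $S\ge 0$ and pushes an optimal decomposition $S=A+B$ through $\Phi$ in the variational formula $\sum_{j\le k}\lambda_j^{\downarrow}(S)=\min\{\mathrm{Tr\,}A+k\lambda_1^{\downarrow}(B)\}$. The two are dual forms of the same estimate.

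You are right to raise the realness issue, which the paper passes over silently, but your fallback map $X\mapsto\frac12\{\Phi(X)+\overline{\Phi(\overline X)}\}$ buys nothing. On real $S$ it returns the entrywise real part of $\Phi(S)$. That equals $\Phi(S)$ exactly when $\Phi(S)$ is already real, and in that case Corollary \ref{correalprec} applies to $\Phi(S)$ directly.

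No workaround can exist, because the right-hand side of the conclusion is always real. Take $\Phi(X)=WXW^*$ with $W=\frac{1}{\sqrt2}\begin{pmatrix}1&i\\ i&1\end{pmatrix}$ and $S=\mathrm{diag}(1,0)$. This $\Phi$ is positive, unital and trace preserving, yet $\Phi(S)=\frac12\begin{pmatrix}1&-i\\ i&1\end{pmatrix}$ is not real. So the statement genuinely needs the extra hypothesis that $\Phi$ maps real matrices to real matrices, or at least that $\Phi(S)$ is real. You should state this as a hypothesis rather than offer an alternative.
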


\vskip 5pt
Indeed, it is well known that $\Phi(S)\prec S$. We may assume  $S\ge 0$ and 
the majorization follows readily from the standard variational formula, for $Z\in\bM_d^+$ and $1\le k\le d$,
$$
\sum_{j=1}^k \lambda_j^{\downarrow}(Z)=\min\left\{\mathrm{Tr\,}A +k \lambda_1^{\downarrow}(B)\right\}
$$
where the minmum is taken over all decompositions  $Z=A+B$ in $\bM_d^+$. See for instance \cite[Theorem 6.20]{HiP}. Another proof of $\Phi(S)\prec S$ is  \cite[Theorem 7.1]{Ando}.

We now reformulate, in the real setting, Corollary \ref{sumvsmax}. We state its multivariable version.

\vskip 5pt
\begin{cor}\label{sumvsmaxreal} Let  $\{A_k\}_{k=1}^m$ be real matrices in $\bM_d^+$. Then there exist  orthogonal matrices $\{U_i\}_{i=1}^{\omega(d)}$ in $\bM_d$  such that 
$$
\bigvee_{k=1}^m A_k \le \frac{1}{\omega(d)}  \sum_{i=1}^{\omega(d)} U_i \left\{\sum_{k=1}^mA_k\right\}U_i^*.
$$
\end{cor}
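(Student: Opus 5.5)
The plan mirrors the proof of Corollary \ref{sumvsmax}, with Lemma \ref{real} in place of Bhatia's identity and $m$ matrices instead of two. First we need the multivariable Kato limit:
$$
\bigvee_{k=1}^m A_k=\lim_{n\to\infty}\Big(\sum_{k=1}^m A_k^n\Big)^{1/n}.
$$
Its proof is verbatim the one given for two matrices. The sequence $\big(\frac1m\sum_k A_k^n\big)^{1/n}$ is nondecreasing, by operator concavity of $t^{n/p}$ and operator monotonicity of $t^{1/n}$. Its limit dominates each $A_k$ in the sense of condition $(iii)$ of Proposition \ref{OO}, and it is $\preceq X$ whenever every $A_k\preceq X$. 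Finally $m^{1/n}\to1$. Since the $A_k$ are real, every matrix in the sequence is real symmetric, and so is the limit.

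Second, for each fixed $n$ we establish the submajorization
$$
\Big(\sum_{k=1}^m A_k^n\Big)^{1/n}\prec_w \sum_{k=1}^m A_k .
$$
This is the symmetric-norm inequality from \cite{Bpams} used in the proof of Theorem \ref{th2}, applied to the normal (positive) matrices $A_k^n$ with the concave function $f(t)=t^{1/n}$. It gives $\|(\sum_k A_k^n)^{1/n}\|\le\|\sum_k A_k\|$, and the Fan dominance principle converts this into $\prec_w$. Both sides are real symmetric matrices, so we may apply Lemma \ref{L3} with $g(t)=t$. This yields orthogonal matrices $U_1^{(n)},\ldots,U_{\omega(d)}^{(n)}$ such that
$$
\Big(\sum_{k=1}^m A_k^n\Big)^{1/n}\le\frac1{\omega(d)}\sum_{i=1}^{\omega(d)}U_i^{(n)}\Big\{\sum_{k=1}^m A_k\Big\}U_i^{(n)*}.
$$

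Third, the orthogonal group is compact, so we can pass to a subsequence along which each $U_i^{(n)}$ converges to an orthogonal $U_i$. Letting $n\to\infty$ and using that the positive cone is closed, the first step gives the claimed inequality.

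The only delicate point is checking that Lemma \ref{L3} really applies. Its proof is the Schur--Horn step followed by the Jensen step. For $g(t)=t$ the Jensen step is trivial, so all that is needed is a real Schur--Horn theorem. This holds: if $A\prec B'$ with both real symmetric, then $A$ is orthogonally conjugate to the diagonal of $WB'W^T$ for some orthogonal $W$, and the standard constructive proofs (e.g.\ via Givens rotations, see \cite{Chan}) produce real matrices. Lemma \ref{real} then replaces Bhatia's diagonal-unitary averaging by an average of $\pm1$ diagonal matrices, which are orthogonal. With these real ingredients, the orthogonal matrices $U_iR_jW$ give the required family, with $\omega(d)$ terms.
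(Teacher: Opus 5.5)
Your proposal is correct and follows essentially the route the paper intends: the $m$-matrix Kato limit, the submajorization $(\sum_k A_k^n)^{1/n}\prec_w\sum_k A_k$ from \cite{Bpams}, Lemma \ref{L3} with $g(t)=t$ (a real Schur--Horn step plus Lemma \ref{real}), and compactness of the orthogonal group to pass to the limit. You also supply two details the paper leaves implicit: the extension of Kato's limit theorem to $m$ matrices, and the need for a real Schur--Horn theorem inside Lemma \ref{L3}.
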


\vskip 5pt
The following remark
summarizes the history of the connection between majorization and unitary orbits.

\vskip 5pt
\begin{remark} It has long been known that, for Hermitian (real) matrices in $\bM_d$, the majorization $A\prec B$ is equivalent to the fact that $A$ belongs to the convex
hull of the unitary (orthogonal) orbit of $B$,
$$
A=\sum_{i=1}^N t_i U_i BU_i^*
$$
where the coefficients $t_j>0$ satisfy $\sum_j t_j= 1$.
See for instance \cite[Theorem 7.1]{Ando}, where we see that we can take $N=d!$ +1, as $d!$ is the cardinal of permutation matrices in $\bM_d$.  In fact one may assume that $A$ and $B$ are diagonal, and Carathéodory's theorem then shows that we can reduce   $N=d+1$. Somewhat surprisingly, this observation seems not to have appeared before Zhan's 2003 note \cite{Zhan}. We have seen that   we can actually take $N=d$, in case of a unitary orbit, and further  obtain an average in the unitary orbit; i.e.,  equality of all $t_j$. For averages over orthogonal orbits, we have shown that one may take $N=\omega(d)$ . The final questions below ask whether this bound is optimal.
\end{remark}

\begin{question}\label{qf1} What is the smallest integer $\varphi(d)$ such that there exist orthogonal matrices  $\{U_i\}_{i=1}^{\varphi(d)}$ in $\bM_d$ satisfying
$$
\Delta(X) =\frac{1}{\varphi(d)}\sum_{i=1}^{\varphi(d)} U_iXU_i^*.
$$
for every real matrix $X\in\bM_d$ ?
Is it $\varphi(d)=d$  or $\varphi(d)=\omega(d)$ ?
\end{question}

\begin{question}\label{qf2} The same as Question \ref{qf1}, but with orthogonal matrices $U_i$ that may depend on $X.$
\end{question}

\vskip 15pt
\noindent
Jean-Christophe Bourin

\noindent
Université Marie et Louis Pasteur, CNRS, LmB (UMR 6623), F-25000 Besançon, France.

\noindent

\noindent

\noindent
Email: jcbourin@univ-fcomte.fr

  \vskip 15pt\noindent
Eun-Young Lee

\noindent
Department of mathematics, KNU-Center for Nonlinear Dynamics,

\noindent
Kyungpook National University,

\noindent
Daegu 702-701, Korea.

\noindent
 Email: eylee89@knu.ac.kr


\begin{thebibliography}{99}
{\small

 \bibitem{AW} 
 C. Akemann and N. Weaver: Minimal upper bounds of commuting operators,
Proc. Amer. Math. Soc. 124, 3469–3476 (1996).

\bibitem{Ando} 
T.\ Ando, Majorization, doubly stochastic matrices, and comparison of eigenvalues, Lin. Alg.
Appl. 118 (1989), 163-248.

 \bibitem{Bh-monthly}  R.\ Bhatia, Pinching, trimming, truncating, and averaging of matrices. {\it Amer.\ Math.\ Monthly}  107  (2000),  no. 7, 602--608. 



\bibitem{Bpams} J.-C.\ Bourin,  A matrix subadditivity inequality for symmetric norms, {\it Proc. Amer. Math. Soc.}\ 138 (2010), no. 2, 495–504.

\bibitem{BH} J.-C.\ Bourin and F.\ Hiai,
 Jensen and Minkowski inequalities for operator means and anti-norms. {\it Linear Algebra Appl.}\ 456 (2014), 22–53. 


\bibitem{BLjot} J.-C.\ Bourin and E.-Y.\ Lee, Concave functions of positive operators, sums, and congruences. J. Operator Theory 63 (2010), no. 1, 151–157.


\bibitem{BL}
J.-C.\ Bourin and E.-Y.\ Lee, Unitary orbits of Hermitian operators with convex
or concave functions, {\it Bull. London Math. Soc.} {\bf 44} (2012),
1085--1102.

\bibitem{BLtv} J.-C.\ Bourin and E.-Y. Lee, 
 Matrix inequalities from a two variables functional, {\it Internat.\ J.\ Math.}\ 27 (2016), no.\ 9, , 1650071, 19 pp.


\bibitem{BLsym} J.-C.\ Bourin and E.-Y.\ Lee, Matrix parallelogram laws and symmetric moduli, {\it Internat.\ J.\ Math.} 37
(2026), no.\ 3, 2650018.

\bibitem{BLtriang} J.-C.\ Bourin and E.-Y.\ Lee, Triangle inequalities for the operator symmetric modulus, preprint, arXiv:2602.19607

\bibitem{Chan} N. N. Chan and  Kim Hung Li, Diagonal elements and eigenvalues of a real symmetric matrix, {\it J. Math. Anal. Appl.} 91 (1983), no. 2, 562–566.

\bibitem{HiP} F.\ Hiai, D.\ Petz, Introduction to matrix analysis and applications. Universitext. Springer, Cham; Hindustan Book Agency, New Delhi, 2014.

\bibitem{HJS}
B.\  T.\ Hoai, C.\ R.\ Johnson, I.\ M.\ Spitkovsky, Spectral dominance and commuting chains, {\it Proc. Amer. Math. Soc.}\ 136 (2008), no. 6, 2019–2029. 


\bibitem{K} 
T. Kato, Spectral order and a matrix limit theorem, {\it Linear  Mult. Alg.} 8 (1979),
15-19.

\bibitem{M}  L.\ Mirsky,  Matrices with prescribed characteristic roots and diagonal elements,  {\it J.\ London Math.\ Soc.}\ 33 (1958), 14–21.

\bibitem{Ols} M.\ P.\ Olson, The selfadjoint operators of a von Neumann algebra form a conditionally complete
lattice, {\it Proc. Amer. Math. Soc.} 28 (1971), 537-544.


\bibitem{Rot} S.\ Ju.\ Rotfel'd,  The singular values of a
sum of completely continuous operators,   Topics in
Mathematical Physics, Consultants Bureau, Vol.\ 3 (1969)
73-78.

\bibitem{T}   R.\ C.\ Thompson, Convex and concave functions of singular
values of matrix sums, {\it Pacific J. Math.}\  66 (1976), 285-290.



\bibitem{Zhan} X.\ Zhan, The sharp Rado theorem for majorizations, {\it Amer.\ Math.\ Monthly} 110
(2003) 152--153.

\bibitem{TZsym} T.\ Zhang, An operator triangle inequality for the quadratic symmetric modulus, preprint, arXiv:2602.01463.

\bibitem{Z2} T.\ Zhang, Operator symmetric moduli and sharp triangle inequalities, preprint,  arXiv:2603.01046.
}

\end{thebibliography}
\end{document}